\pgfplotsset{compat=1.18} 
\numberwithin{equation}{section}
\newcommand{\C}{\mathcal{C}}
\newcommand{\LL}{\mathcal{L}}
\newcommand{\T}{\mathbb{T}}
\newcommand{\D}{\mathbb{D}}
\newcommand{\DD}{\mathcal{D}}
\newcommand{\FF}{\mathcal{F}}
\newcommand{\B}{\mathcal{B}}
\newcommand{\R}{\mathbb{R}}
\newcommand{\Po}{\mathcal{P}}
\newcommand{\hil}{\mathcal{H}}
\newtheorem{thm}{Theorem}[section]
\newtheorem*{thm*}{Theorem}
\newtheorem{lem}[thm]{Lemma}
\newtheorem{cor}[thm]{Corollary}
\newtheorem*{cor*}{Corollary}
\theoremstyle{definition}
\theoremstyle{definition}
\newtheorem{defn}[thm]{Definition}
\newtheorem*{defn*}{Definition}
\newtheorem{claim*}{Claim}
\title[Distributing mass under a pointwise bound and an application]{Distributing mass under a pointwise bound and an application to weighted polynomial approximation}
\author{Linus Bergqvist and Bartosz Malman}
\address{Division of Mathematics and Physics, 
        Mälardalen University,
		721 23 Västerås, Sweden}
\email{bartosz.malman@mdu.se} 
\address{Stockholm, Sweden}
\email{linus.lidman.bergqvist@gmail.com} 
\begin{document}

\begin{abstract}
    Inspired by applications in weighted polynomial approximation problems, we study an optimal mass distribution problem. Given a gauge function $h$ and a non-negative ``roof" function $R$ compactly supported in $\R^n$, we are interested in estimating the supremum of the $L^1$-norms of non-negative functions $f$ satisfying the pointwise bound $f \leq R$ and the mass distribution bound $\int_c f \, dV_n \leq h(V_n(c))$, where $c$ is a cube and $V_n$ is the volume measure. We prove a duality theorem which states that the optimal value in this maximization problem is the minimum among certain quantities associated with semi-covers by cubes of the support of $R$. We use our theorem to solve the so-called \textit{splitting problem} in the theory of polynomial approximations in the plane. As a result, we confirm an old conjecture of Kriete and MacCluer regarding an extension of Khrushchev's original splitting theorem to the weighted context, and explain the mechanics of an example in the research problem book by Havin, Khrushchev and Nikolskii.
\end{abstract}

\thanks{Part of this work has been done during a visit of the second author at Département de mathématiques et de statistique, Université Laval, supported by Simons-CRM Scholar-in-Residence program. He gratefully acknowledges the financial support, and thanks the department for the excellent research environment provided.}

\maketitle

\section{Introduction}

\subsection{A classical mass distribution result} We start by recalling a basic principle in geometric measure theory, which sometimes goes under the name of \textit{Frostman's lemma}, the proof of which can be found in numerous textbooks on the subject. Given a compact set $E$ in the $n$-dimensional Euclidean space $\R^n$ equipped with the volume measure $V_n$, and a real number $\beta \in (0, n)$, one is interested in the maximal amount of mass $\mu(E)$ one can pack on the set $E$ under an upper bound condition on local mass distribution of the form $\mu(c) \leq V_n(c)^{\beta/n}$, where $c$ is a cube in $\R^n$ and $V_n(c)$ is the volume of $c$. Here, of course, $\mu$ is a Borel measure supported on $E$, and a cube $c$ is a subset of $\R^n$ of the form $c = I_1 \times \ldots \times I_n$, where $I_j$ are closed intervals in $\R$ with a common length.

The various formulations of Frostman's lemma postulate that a non-trivial amount of mass $\mu(E) > 0$ can be placed on $E$ under the above distribution condition if and only if the Hausdorff measure $\hil_\beta(E)$ is positive.  Additionally, one is often interested in how large the positive quantity $\mu(E)$ can be. A quantitative version of the result states that
\begin{equation} \label{E:FrostmanDuality}
    \sup_\mu \mu(E) \simeq \inf_{\{c_k\}_k} \sum_{k}  V_n(c_k)^{\beta/n},
\end{equation} where the supremum on the left-hand side is taken over all positive Borel measures supported on $E$ and satisfying the bound $\mu(c) \leq V_n(c)^{\beta/n}$, and the infimum on the right-hand side is taken over all the covers of $E$ by cubes $c_k$. The implied constant depends on the dimension $n$. Proofs of the above statements can be found in \cite{carleson1967selected}, \cite{mattila1999geometry}, and various other sources.

Frostman's lemma is undoubtedly a valuable tool in geometric measure theory and beyond. In this article, we shall discuss a variant of Frostman's lemma which deals with positive functions $f$ instead of measures $\mu$, where the support condition is replaced by a pointwise bound on $f$. This change of setting is motivated by applications to certain outstanding problems in the theory of weighted polynomial approximation in the plane, but our main theorem (\thref{T:Maindualitytheorem} below) is hopefully of independent interest.

\subsection{Primal problem} \label{S:PrimalSubsec}We need to introduce a few more definitions in order to state our main theorem in a generality necessary for our intended applications.

We shall say that $h$ is a \textit{gauge function} if it is a continuous and increasing function defined for non-negative real numbers, and it satisfies $h(0) = 0$. We shall require our gauge functions $h$ to also satisfy the regularity conditions 

\begin{equation} \label{E:Reg1}
h(x)/x \text{ is decreasing in } x \tag{R1}  \end{equation}
and 
\begin{equation} \label{E:Reg2}
    \lim_{x \to 0} \frac{h(x)}{x} = \infty. \tag{R2}  
\end{equation}
Note that $\eqref{E:Reg1}$ implies the subadditivity property $h(x+y) \leq h(x) + h(y)$.

Important examples are \[h(x) = x \log(1/x) \quad \text{ (for small } x) \]and
\[h(x)  =  x^\alpha, \quad \alpha \in (0,1)\] 


A \textit{roof function} $R$ is a compactly supported non-negative measurable function from $\R^n$ into $[0, \infty]$. We make no assumptions regarding integrability of $R$ with respect to $V_n$, and we emphasize that we allow $R$ to attain the value $\infty$ on sets of positive volume.

\begin{defn} \thlabel{D:FclassDef}
 Given a positive integer $n$, a roof function $R$, and a gauge function $h$, we define the class $\FF_n(R, h)$ of all measurable functions $f: \R^n \to [0, \infty)$ which satisfy the following two conditions:
\begin{enumerate}[(i)]
    \item $\int\displaylimits_c f \, dV_n \leq h(V_n(c))$ for every cube $c \subset \R^n$,
    \item $f \leq R$ almost everywhere in $\R^n$ with respect to the volume measure $V_n$.
\end{enumerate}
\end{defn}

The following is our primal problem. In the family $\FF_n(R,h)$ we seek to find $f$ for which the quantity \[ \|f\|_1 := \int\displaylimits_{\R^n} f \, dV_n\] is as large as possible. 

The problem has an obvious similarity to the above described classical problem of packing maximal amount of mass $\mu(E)$ on a compact set $E$ under the condition that the distribution bound $\mu(c) \leq h(V_n(c))$ is satisfied for cubes. The parameter of our optimization problem is a function instead of a general measure, and the support condition is replaced by the weaker pointwise condition $f \leq R$. Note that we may pass back to a support condition by introducing a function $R_E$ for which we have $R_E = \infty$ on $E$ and $0$ elsewhere.

\subsection{Dual problem} Let $R$ be as before, and consider the quantity 
\begin{equation} \label{E:GenHausdorffContDef}
M_h(R) := \inf_{ \{ c_k\}_k} \, \sum_k h(V_n(c_k)) + \int\displaylimits_{\R^n \setminus \cup_{k} c_k} R \, dV_n
\end{equation} where the infimum on the right-hand side is taken over all countable families of cubes $\{c_k\}_k$. Here we essentially attempt to minimize the $L^1$-integral of $R$ with the twist that we can locally replace the contribution of $R$ on a cube $c$ by the quantity $h(V_n(c))$.

Roughly speaking, our mapping $R \mapsto M_h(R)$ is a functional version of a Hausdorff-type measure. Namely, if $E \subset \R^n$ is a set of positive volume measure, and as above, $R_E$ is the function supported on $E$ taking on the value $\infty$ on $E$ and $0$ on $\mathbb{R}^n \setminus E$, then $M_h(R_E)$ in \eqref{E:GenHausdorffContDef} is similar to the quantity appearing on the right-hand side of \eqref{E:FrostmanDuality}, but the new version is instead equal to the infimum of the values $\sum_k h(V_n(c_k))$, where $\{c_k\}_k$ is any \textit{up-to-measure-zero} cover of $E$. Such covers have found applications in polynomial approximation problems studied in \cite{malman2023thomson}.

\subsection{Duality theorem}

The two criteria on $f$ for membership in the family $\FF_n(R,h)$ put some restrictions on the size of the quantity $\|f\|_1$. For instance, if $\{c_k\}_k$ is any countable set of cubes, then we may use properties $(i)$ and $(ii)$ above to estimate
\begin{align*}
    \| f\|_{1} &=  \Big(\sum_{k} \int\displaylimits_{c_k} f\, dV_n \Big) +  \int\displaylimits_{\R^n \setminus \cup_{k} c_k} f \, dV_n \\
    &\leq \sum_k h(V_n(c_k)) + \int\displaylimits_{\R^n \setminus \cup_{k} c_k} R \, dV_n.
\end{align*}
Thus by definition \eqref{E:GenHausdorffContDef}, it follows that \[ \sup_{f \in \FF_n(R,h)} \|f\|_1 \leq M_h(R).\]  Our main result says that the two quantities in the inequality above are actually comparable.

\begin{thm} \thlabel{T:Maindualitytheorem}
    The estimate \[ \sup_{f \in \FF_n(R,h)} \|f\|_1 \simeq_n M_h(R)\] holds, with the implied constant depending only on the dimension $n$.
\end{thm}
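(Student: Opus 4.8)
The plan is to prove the non‑trivial inequality $\sup_{f \in \FF_n(R,h)} \|f\|_1 \gtrsim_n M_h(R)$, since the reverse inequality is already established in the excerpt. I would set $S := \sup_{f \in \FF_n(R,h)} \|f\|_1$ and argue that $M_h(R) \leq C_n S$ for a dimensional constant $C_n$. The basic strategy is a \emph{greedy/stopping‑time construction}: one attempts to build a competitor $f$ of large $L^1$‑norm, and the cubes on which the construction is forced to stop (because condition $(i)$ is about to be violated) assemble into a semi‑cover $\{c_k\}_k$ of the ``bad'' part of $\supp R$ with $\sum_k h(V_n(c_k))$ controlled, while on the complement $f$ essentially equals $R$ so $\int_{\R^n \setminus \cup_k c_k} R \, dV_n$ is also controlled. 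Concretely, I would use a dyadic decomposition of $\R^n$ (translating the roof function so that $\supp R$ lies in a single dyadic cube, using compactness) and run a Calderón–Zygmund‑type stopping argument on the function $R$ relative to the gauge $h$: a dyadic cube $Q$ is \emph{selected} if $\int_Q R \, dV_n > h(V_n(Q))$ but the parent of $Q$ was not selected; on a maximal selected cube set $f = \lambda_Q R$ with $\lambda_Q \in (0,1]$ chosen so that $\int_Q f\, dV_n = h(V_n(Q))$, and on the rest set $f = R$.

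The key steps, in order, are: (1) reduce to $\supp R$ contained in the unit cube and set up the dyadic stopping family $\{Q_k\}$ of maximal selected cubes; (2) check that the resulting $f$ satisfies condition $(ii)$ trivially and verify $(i)$ — this is where the regularity hypotheses \eqref{E:Reg1} and \eqref{E:Reg2} enter. For $(i)$ one must bound $\int_c f$ for an \emph{arbitrary} cube $c$, not just dyadic ones; the standard device is that an arbitrary cube of sidelength $\ell$ is covered by a bounded number (depending only on $n$) of dyadic cubes of comparable size, and for each such dyadic cube $Q'$ one has $\int_{Q'} f \leq \int_{Q'} R \leq$ (sum over maximal selected subcubes plus the unselected part) $\leq C h(V_n(Q'))$ using that unselected cubes satisfy $\int R \leq h(V_n)$ and that subadditivity of $h$ lets one sum the $h$‑contributions of a disjoint dyadic family inside $Q'$ back up to $\lesssim h(V_n(Q'))$; then subadditivity again across the $O(1)$ pieces covering $c$ gives $\int_c f \leq C_n h(V_n(c))$. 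Since we only get $\int_c f \le C_n h(V_n(c))$ rather than $\le h(V_n(c))$, I would finish by applying the construction to the rescaled gauge $h/C_n$ (still a gauge function satisfying \eqref{E:Reg1}–\eqref{E:Reg2}), or equivalently rescale $f$, noting $M_{h/C_n}(R) \gtrsim_n M_h(R)$ by subadditivity. (3) Lower‑bound $\|f\|_1$: write $\|f\|_1 = \sum_k h(V_n(Q_k)) + \int_{\R^n \setminus \cup_k Q_k} R\, dV_n$ exactly, by construction, and observe that the right‑hand side is $\geq M_h(R)$ by definition \eqref{E:GenHausdorffContDef}; combined with $f/C_n \in \FF_n(R,h)$ this yields $S \geq \|f\|_1 / C_n \geq M_h(R)/C_n$, completing the proof.

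The main obstacle I anticipate is \emph{two intertwined subtleties}: first, the passage from dyadic cubes to arbitrary cubes in verifying condition $(i)$ must be done carefully because $h$ need not be doubling in any strong sense — only \eqref{E:Reg1} ($h(x)/x$ decreasing, hence $h(2x) \le 2h(x)$) and subadditivity are available, so all the summation estimates have to be organized to use exactly those two properties and nothing more; and second, one must ensure the stopping construction actually \emph{terminates in a useful way} and that $\int_{\R^n\setminus\cup_k Q_k}R\,dV_n$ is finite whenever $M_h(R) < \infty$ — if $M_h(R) = \infty$ there is nothing to prove, and if $M_h(R) < \infty$ one needs that the infimum in \eqref{E:GenHausdorffContDef} being finite forces, roughly, that $R$ is integrable off a set coverable cheaply, which should follow by comparing the stopping cubes to a near‑optimal covering family. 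A secondary technical point is handling the possibility $R \equiv \infty$ on a set of positive measure: there the selected cubes have $\int_Q R = \infty > h(V_n(Q))$ automatically, the construction still assigns $f = \lambda_Q R$ finite on maximal selected cubes, and one checks the identity for $\|f\|_1$ still holds with the convention that $\int_{\R^n \setminus \cup_k Q_k} R\, dV_n = \infty$ exactly when the optimal $M_h(R)$ forces it, consistent with the claimed comparison.
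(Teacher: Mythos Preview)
Your stopping-time construction has a genuine gap at step~(2). The one-shot rescaling $f = \lambda_{Q_k} R$ on a maximal selected cube $Q_k$ controls $\int_{Q_k} f$, but it does \emph{not} control $\int_{Q'} f$ for dyadic $Q' \subsetneq Q_k$. Your verification of condition~$(i)$ treats only the case where the comparison cube $Q'$ contains (or equals) some of the selected cubes; it is silent when $Q'$ lies strictly \emph{inside} a selected $Q_k$, and there the estimate simply fails. A concrete example in dimension $n=1$ with $h(x)=\sqrt{x}$: take $R = C \cdot 1_{[0,\epsilon]}$ on $\C^1 = [0,1]$ with $C\epsilon > 1$, so that $[0,1]$ itself is the unique maximal selected cube. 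Then $\lambda_{[0,1]} = 1/(C\epsilon)$ and your $f = \epsilon^{-1} 1_{[0,\epsilon]}$, giving $\int_{[0,\epsilon]} f = 1$ while $h(\epsilon) = \sqrt{\epsilon}$; the ratio $1/\sqrt{\epsilon}$ is unbounded, so no dimensional rescaling of $f$ can place it in $\FF_1(R,h)$. Replacing $\lambda_{Q_k} R$ by the constant $h(V_n(Q_k))/V_n(Q_k)$ on $Q_k$ would fix~$(i)$ (via \eqref{E:Reg1}) but destroys~$(ii)$, since $R$ may vanish on part of $Q_k$. The issue you flag with $\int_{Q_k} R = \infty$ (forcing $\lambda_{Q_k} = 0$, hence $\int_{Q_k} f = 0$) is another symptom of the same defect.

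The paper confronts exactly this tension. For each level $m$ it starts from the truncation $\min\{R, 2^{nm} h(2^{-nm})\}$ and performs an \emph{iterated} rescaling, generation by generation from $\DD_{m-1}$ up to $\DD_0$, so that the resulting $f_m$ satisfies the mass bound at \emph{every} dyadic scale simultaneously; this is \thref{L:mainlemmapartialsol}. The bottleneck cubes $\{d_{m,k}\}_k$ record where a nontrivial rescaling occurred, and the identity you want holds only with the truncation in place of $R$ on the complement of the bottlenecks. Passing to the limit $m \to \infty$ is then nontrivial because the pointwise bound $f_m \leq R$ is not stable under weak limits; the paper handles this via a separate compactness argument for the bottleneck families (\thref{L:BottleneckCompactness}), which relies essentially on both \eqref{E:Reg1} and \eqref{E:Reg2}. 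Your proposal contains neither the multi-level rescaling nor any limiting/compactness step, and without them the argument does not close.
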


More precisely, our proof gives the proportionality constant $3^n$, so that we have
\[\sup_{f \in \FF_n(R,h)} \|f\|_1 \leq M_h(R) \leq 3^n \cdot \sup_{f \in \FF_n(R,h)} \|f\|_1.\] Surely, the constant in this estimate is not optimal, but its precise value is not presently of interest.

\thref{T:Maindualitytheorem} can be interpreted as a functional version of the Frostman duality in \eqref{E:FrostmanDuality}. Our proof of the theorem has certain bits in common with classical proofs of Frostman's lemma (as presented, for instance, in \cite{carleson1967selected}) but we face a significant difficulty in being unable to preserve the pointwise condition $f_m \leq R$ on a sequence $\{f_m\}_m$ of partial  solutions under weak limits, whereas the corresponding support condition in the classical problem is easily handled by functional analytic tools. Our approach involves the construction of an appropriate partial solution using a primitive version of Ford-Fulkerson network flow algorithm from \cite{ford1956maximal}, and a compactness theorem for so-called \textit{bottlenecks}. The proof of \thref{T:Maindualitytheorem} is given in Section~\ref{S:DualityThmProofSection}.

\subsection{Application to weighted polynomial approximation}
\label{S:IntroSecApplALL}

We will apply \thref{T:Maindualitytheorem} to solve a problem of weighted polynomial approximation in the plane. Let $\mathbb{C}$ denote the complex plane, $z$ the independent complex variable, and \[ \Po = \Big\{ p(z) = \sum_{k=0}^n p_k z^k : p_k \in \mathbb{C}, n \in \mathbb{N}_0 \Big\}\] the set of polynomials in $z$.

\subsubsection{$\Po^t(\mu)$-spaces} Given a non-negative finite Borel measure $\mu$ with compact support in $\mathbb{C}$, and $t \in (0, \infty)$, the space $\Po^t(\mu)$ is the closure of $\Po$ in the Lebesgue space $\LL^t(\mu)$ in the natural topology. For $t = 2$, the multiplication operator $M_z: f(z) \mapsto zf(z)$ is a model for subnormal operators, in the sense that any subnormal operator satisfying some natural assumptions will be unitarily equivalent to $M_z$ on a space $\Po^2(\mu)$ for some measure $\mu$. A (biased) selection of highlights of the theory is Brown's invariant subspace existence theorem for subnormal operators in \cite{brown1978some}, Aleman-Richter-Sundberg paper \cite{aleman2009nontangential} on function and operator theory in $\Po^t(\mu)$, and Thomson's solution in \cite{thomson1991approximation} to the Mergelyan-Brennan problem, which asserts that $\Po^t(\mu) \neq \LL^t(\mu)$ if and only if $\Po^t(\mu)$ admits a so-called analytic point evaluation functional: a point $\lambda \in \mathbb{C}$ such that \[f \mapsto f(\lambda)\] is bounded in a neighbourhood of $\lambda$. 

It is known that every space $\Po^t(\mu)$ can be decomposed as a direct sum 
\begin{equation}
    \label{E:ThomsonDecompStruct}
    \Po^t(\mu) = \Big( \bigoplus_{k \geq 1} \Po^t(\mu|B_k) \Big) \oplus \LL^t(\mu|B_0)
\end{equation} where the sets $B_k$ form a Borel partition of the support of $\mu$, and $\mu|B_k$ denotes the corresponding measure restriction. Thomson's theorem from \cite{thomson1991approximation} established that to the pieces indexed by $k \geq 1$ there corresponds a simply connected domain $U_i$ of analytic point evaluations for $\Po^t(\mu|B_k)$. The \textit{splitting problem} concerns the structure of this decomposition in the special case that $\mu$ is supported in the closure of the unit disk $\D := \{ z \in \mathbb{C} : |z| < 1\}$ and has the form
\begin{equation}
    \label{E:MuPlaneEq}
    d\mu(z) = G(1-|z|) dA(z) + w(z) |dz|
\end{equation} where $G(x)$ is a non-negative function defined for $x \in (0,1]$, $w(z)$ is a non-negative Borel measurable and integrable function on $\T := \partial \D$, $dA$ is the area measure on $\D$, and $|dz|$ is the arclength measure on $\T$. Even in this special case, the structure of the decomposition \eqref{E:ThomsonDecompStruct} can be complicated. We always have
\begin{equation} \label{E:ThomsonDecompStructSpecialCase}
\Po^t(\mu) = \Po^t(\mu|\D_+) \oplus \LL^t(\mu|\T_-)    
\end{equation} where 
$\D_+ = \D \cup E$ and $\T_- = \T \setminus E$ for some Borel measurable set $E \subset \T$. The structure of the set $E$ depends on the interplay between the weight pair $(G,w)$. The exact shape of the decomposition for superexponentially decaying $G$ is given in \cite{malman2023revisiting}. 

\subsubsection{Splitting} \label{S:SplittingIntroSubsec} The phenomenon of \textit{splitting} is said to occur if we have \[ \D_+ = \D\,  \text{ and } \,  \T_- = \T\]
in the decomposition \eqref{E:ThomsonDecompStructSpecialCase}.
The nomenclature is justified by an equivalent formulation of the problem, which asks to characterize the pairs $(G,w)$ for which a sequence $\{p_k\}_k$ of polynomials in $\Po$ exists such that, simultaneously, we have
\begin{enumerate}
    \item $p_k \to 0$ in the topology of the space $\LL^t(w\, |dz|)$.
    \item $p_k \to 1$ in the topology of the space $\LL^t(G(1-|z|)dA)$,    
\end{enumerate}

The splitting problem attracted attention in the 1970s and 1980s, and below we discuss some instances of the problem that have previously been solved.

\subsubsection{Volberg's splitting theorem} In the case that $G(x)$ decays so fast as $x \to 0$ that we have 
\begin{equation} \label{E:LogLogDiv}
        \int_0^a \log \log(1/G(x)) \, dx = \infty \tag{LogLogDiv}
\end{equation}  for small $a > 0$, and $G$ satisfies some auxiliary benign regularity assumptions, then $\Po^t(\mu)$ of form \eqref{E:MuPlaneEq} splits if and only if \[ \int_\T \log w \, |dz| = -\infty.\] The statement follows by (functional analytic) duality from Volberg's theorem on summability of the logarithm of an \textit{almost analytic} function in \cite{volberg1982logarithm} and \cite{vol1987summability}, with another exposition being available in the book \cite{havinbook} by Havin and Jöricke. The works of Borichev and Volberg in \cite{borichev1990uniqueness}, and of Volberg in \cite{volberg1978mean} and \cite{volberg1988uniqueness}, present interesting results in a similar direction.

\subsubsection{Superexponential splitting theorem} In the range of $G$ for which the divergence condition \eqref{E:LogLogDiv} fails, but $G$ decays superexponentially, in the sense that 
\begin{equation} \label{E:ExpDec}
\liminf_{x \to 0} \, x \log(1/G(x)) > 0, \tag{ExpDec}
\end{equation} splitting occurs if and only if we have \[ \int_I \log w \, |dz| = -\infty\] for every arc $I \subset \T$. This condition for splitting has been conjectured by Kriete and MacCluer in their 1990 paper \cite{kriete1990mean}, and confirmed by the second author in \cite{malman2023revisiting}. Some earlier partial results which have been obtained by using a composition operator technique can be found in \cite[Chapter 9.4]{cowen1995composition}.

\subsubsection{Partial results in subexponential regime} \label{S:PartialResSubexpReg} In the case that $G$ decays so slowly that \eqref{E:ExpDec} fails, we may set \begin{equation} \label{E:hInTermsOfGeq} h(x) := x \log (1/G(x)),\end{equation} which is a function that decays to zero as $x \to 0$. For most natural examples of $G$, the function $h$ becomes a gauge function as defined earlier. 

In this regime, results of Khrushchev from 1970s characterize the pairs $(G,w)$ for which splitting occurs in the special case that $w = 1_F$ is a characteristic function of a measurable set $F \subset \T$. In his description, Khrushchev utilizes \textit{$h$-Carleson sets}. We say that a closed set $E \subset \T$ is an $h$-Carleson set if the family $\{I_k\}_k$ of maximal disjoint arcs $I_k$ complementary to $E$ in $\T$ satisfies the condition \[ \sum_k h(|I_k|) < \infty,\] where $|I_k|$ is the arclength of $I_k$. Modulo a regularity condition (see \eqref{E:Reg3} below), Khrushchev's theorem states that splitting occurs for the pair $(G, 1_F)$ if and only if $F$ contains no $h$-Carleson subsets of positive measure. Here the gauge $h$ is constructed from $G$ using \eqref{E:hInTermsOfGeq}. To solve the problem, Khrushchev explicitly constructed a sequence of polynomials with the required properties.

In the context of general $w$, the splitting problem has been discussed in 1984 in the research problem book \cite{HavinProblemBook} authored by Havin, Khrushchev and Nikolskii. Answering a question of Kriete, Volberg showed that there exists a weight $w$ which satisfies $w > 0$ almost everywhere on $\T$, for which $\Po^2(\mu)$ splits even with $G \equiv 1$. This example confirmed that the structure of the support of $w$ cannot tell the whole story, and inspired the subsequent deeper investigation in the paper \cite{kriete1990mean} seeking to generalize Khrushchev's results to the weighted context. See \cite{HavinProblemBook}, \cite{kriete1990mean}, \cite[Chapter 9.4]{cowen1995composition} and the references in those works for several partial results on this problem.

\subsubsection{Solution to the splitting problem in the subexponential regime} \label{S:IntroApplicationsSubs}

A combination of the ``blueprint" of Khrushchev from \cite{khrushchev1978problem} for constructing splitting sequences of polynomials with our duality in \thref{T:Maindualitytheorem} leads to a solution to the splitting problem in the remaining subexponential regime. To state our result in the most universal and applicable form, we shall replace norm convergence condition $(2)$ in Section \ref{S:SplittingIntroSubsec} above with the more general conditions $(ii)$ and $(iii)$ below.

\begin{thm}
    \thlabel{T:GeneralSplittingThm} Let $h$ be a gauge function, $w$ be a non-negative function in $\LL^1(|dz|)$, and $t,\epsilon \in (0,\infty)$. If we have that \[ \int_{E} \log w \, |dz| = -\infty\] for every $h$-Carleson set $E \subset \T$ of positive Lebesgue measure, then there exists a sequence of polynomials $\{p_N\}_N \subset \Po$ such that
    \begin{enumerate}[(i)]
        \item $p_N \to 0$ in the topology of $\LL^t(w\,|dz|)$,
        \item $p_N \to 1$ uniformly on compact subsets of $\D$,
        \item the bound $|p_N(z)| \leq \exp\Big( \epsilon \frac{h(1-|z|)}{1-|z|}\Big)$ holds uniformly in $N$, for all $z \in \D$.
    \end{enumerate}
\end{thm}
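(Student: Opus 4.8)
The plan is to follow Khrushchev's blueprint for constructing splitting sequences, using \thref{T:Maindualitytheorem} in dimension $n=1$ (cubes being arcs in $\T$) as the key quantitative input that was missing in the general weighted case. The starting point is the observation that condition $(iii)$ forces us to control $|p_N|$ in $\D$ by a majorant of the form $\exp(\epsilon\, h(1-|z|)/(1-|z|))$; equivalently, writing $p_N = \exp(q_N)$ near where $p_N$ is large, we want $\log|p_N(z)| \lesssim \epsilon\, h(1-|z|)/(1-|z|)$. A natural device is to take $p_N$ to be (a polynomial approximant of) an outer function $\exp(u_N + i\tilde u_N)$, where $u_N$ is the Poisson extension of a function $\varphi_N$ on $\T$ with $\varphi_N \le 0$, chosen so that $\varphi_N$ is very negative on the part of $\T$ carrying most of the mass of $w$, while the harmonic extension of $\varphi_N$ into the disk does not dip too far below $-\epsilon\, h(1-|z|)/(1-|z|)$. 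Since the Poisson kernel at radius $1-\delta$ spreads mass over an arc of length $\sim\delta$, the harmonic extension at a point with $1-|z| = \delta$ sees roughly the average of $\varphi_N$ over such an arc; thus the constraint ``$u_N(z) \ge -\epsilon\, h(\delta)/\delta$ for $1-|z|=\delta$'' translates, after dividing by $\delta$ and using (R1)--(R2), into a bound of the form $\int_c (-\varphi_N)\,|dz| \lesssim h(|c|)$ for every arc $c$ — precisely the defining inequality $(i)$ in \thref{D:FclassDef} for the function $f_N := -\varphi_N$ (up to the dimensional constant and the choice of $\epsilon$).

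With this translation in hand, the construction proceeds as follows. Fix a large parameter and consider the ``roof'' $R_N := \min(-\log w, N)$ truncated and clipped to be non-negative, i.e. $R_N = \max(0, \min(-\log w, N))$, supported on $\T$. We want to produce $\varphi_N$ with $-\varphi_N \le R_N$ a.e.\ (so that $u_N \le -(-\varphi_N)$ stays above $\log w$ in an averaged sense, which will drive $p_N \to 0$ in $\LL^t(w\,|dz|)$) and with $-\varphi_N \in \FF_1(R_N, \epsilon' h)$ for a suitable $\epsilon' \asymp \epsilon$. The hypothesis that $\int_E \log w\,|dz| = -\infty$ for every $h$-Carleson set $E$ of positive measure is exactly what guarantees, via the dual description, that $M_{\epsilon' h}(R_N) \to \infty$ as $N \to \infty$: if it stayed bounded, an optimal (or near-optimal) semi-cover $\{c_k\}$ would exhibit a set $E = \T \setminus \bigcup_k c_k$ of positive measure that is covered up-to-measure-zero by arcs with $\sum_k h(|c_k|)$ finite — hence an $h$-Carleson set — on which $\int_E \log w\,|dz| > -\infty$, contradicting the hypothesis. (One checks that the arcs can be arranged to be complementary to a closed set, invoking the mild regularity of $h$; this is a standard manipulation.) Now \thref{T:Maindualitytheorem} yields $f_N \in \FF_1(R_N, \epsilon' h)$ with $\|f_N\|_1 \ge M_{\epsilon' h}(R_N)/3 \to \infty$. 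Setting $\varphi_N := -f_N$, forming $U_N := $ outer function with $|U_N| = \exp(-f_N)$ on $\T$, and then taking $p_N$ to be a polynomial so close to $U_N$ that $(i)$--$(iii)$ are inherited (polynomials are dense and $U_N$ is bounded analytic, so Taylor or Fejér sections suffice), we get: $(iii)$ from the Poisson-kernel/arc-average estimate above; $(ii)$ because $\int_\T f_N\,|dz| = \|f_N\|_1 \to \infty$ combined with $f_N \le R_N \le N$ forces, by a convexity/normalization argument, $\log|U_N(0)| = -\frac{1}{2\pi}\int_\T f_N \to -\infty$ — wait, that pushes $p_N \to 0$ at the origin, the wrong direction. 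So instead one normalizes: replace $p_N$ by $p_N/U_N(0)$-type correction, or better, follow Khrushchev and build $p_N$ as $1$ minus a small perturbation; the correct mechanism is that $\exp(-f_N)$ is tiny where $f_N$ is large (forcing smallness in $\LL^t(w\,|dz|)$ since there $-\log w$ is large too), while $f_N$ is supported where $w$ is small, so on compact subsets of $\D$ away from the ``bad'' boundary arcs the harmonic extension of $-f_N$ is uniformly small, giving $(ii)$.

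The main obstacle I anticipate is the passage between the averaged boundary condition $\int_c f_N\,|dz| \le \epsilon' h(|c|)$ and the genuine pointwise majorization $\log|U_N(z)| \le \epsilon\, h(1-|z|)/(1-|z|)$ in the interior — this requires a careful Poisson-kernel estimate showing that the harmonic extension of a function satisfying the Carleson-type arc bound is controlled by $h(1-|z|)/(1-|z|)$, uniformly, with the constant absorbed into $\epsilon$ by choosing $\epsilon'$ small; the regularity hypotheses (R1)--(R2) on $h$ are precisely what make this dyadic-decomposition estimate go through (one splits $\T$ into the arc of length $\sim(1-|z|)$ nearest $z$ and dyadic annular arcs, bounds the contribution of each using the arc condition and $h(2^k\delta) \le 2^k h(\delta)$ from subadditivity, and sums a geometric-type series). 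The second delicate point is transferring properties $(i)$--$(iii)$ from the analytic function $U_N$ to an honest polynomial $p_N$ while keeping $(iii)$'s uniform-in-$N$ bound; this is handled by noting $U_N \in H^\infty$ with $\|U_N\|_\infty \le 1$ and approximating by polynomials in a manner that respects the growth majorant (e.g.\ dilating $U_N(rz)$ for $r<1$ close to $1$ and then taking a Taylor section, which only improves the interior bound), and by verifying that the $\LL^t(w\,|dz|)$-convergence survives the approximation since $w$ is fixed and integrable. Everything else — the reduction of the splitting problem to producing such a sequence, which was recalled in Section~\ref{S:SplittingIntroSubsec} — is already in place.
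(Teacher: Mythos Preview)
Your proposal has the right overall architecture (outer functions, Poisson estimate translating the arc bound into the interior majorant, polynomial approximation at the end), but there is a genuine gap at the heart of the construction, and you in fact notice it yourself: applying \thref{T:Maindualitytheorem} globally to $R_N=\max(0,\min(-\log w,N))$ and setting $U_N=\exp(-f_N-i\widetilde{f_N})$ forces $\log|U_N(0)|=-\tfrac{1}{2\pi}\int_\T f_N\to-\infty$, so $U_N\to0$, not $1$. None of the suggested repairs works. Dividing by $U_N(0)$ destroys the bound in $(iii)$; the claim that ``on compact subsets the harmonic extension of $-f_N$ is uniformly small'' is simply false, since that extension at the origin equals $-\|f_N\|_1/2\pi\to-\infty$. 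Moreover, $\|f_N\|_1\to\infty$ does not yield $f_N\to\infty$ pointwise a.e.\ on $\{w>0\}$, which is what you would need for $(i)$ via dominated convergence. Finally, your argument that $M_{\epsilon'h}(R_N)\to\infty$ is garbled: you write that a near-optimal semi-cover would exhibit $E=\T\setminus\bigcup_k c_k$ ``covered up to measure zero by arcs with $\sum_k h(|c_k|)$ finite,'' but $E$ is precisely the \emph{complement} of those arcs, not a set covered by them.

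The missing idea, which is the content of the paper's \thref{L:DualityBuildingBlock}, is to apply the duality \emph{locally} and build in a mean-zero normalization by hand. Partition $\T$ into $N$ arcs $I$ of length $2\pi/N$. On each $I$, apply \thref{T:Maindualitytheorem} not to the truncated roof but to $R_{N,I}:=R\cdot 1_{I\setminus A_N}$, where $A_N=\{R\le N\}$. The hypothesis on $w$ forces any semi-cover making $M_h(R_{N,I})$ finite to cover $I$ up to measure zero (otherwise the leftover closure is an $h$-Carleson set of positive measure on which $\int\log w>-\infty$), so $M_h(R_{N,I})\ge h(|I|)$ and duality produces $f^+_{N,I}\ge0$ supported on $I\setminus A_N$ with $\int_I f^+_{N,I}\gtrsim h(|I|)$. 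Now subtract a constant on $A_N\cap I$ to make the total integral over $I$ vanish: $f_{N,I}=f^+_{N,I}-p_{N,I}1_{A_N\cap I}$. Summing over the arcs gives $f_N$ with $\int_\T f_N=0$, so $H_N(0)=1$; the negative part satisfies $f_N\le -C_0\,h(2\pi/N)/(2\pi/N)\to-\infty$ on $\bigcup_N A_N=\{w>0\}$ by \eqref{E:Reg2}; and a further scalar $\gamma_N\to0$ in the exponent lets you absorb constants into $\epsilon$ in $(iii)$ and get $\limsup|H_N|\le1$ in $\D$, whence $(ii)$ by the maximum principle.
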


The proof is given in Section~\ref{S:SplittingSec}. Note that for $h$ defined by \eqref{E:hInTermsOfGeq}, part $(iii)$ above is equivalent to $|p_N(z)|^{1/\epsilon} G(1-|z|) \leq 1$. Setting $1/\epsilon = t$, we obtain the norm convergence $(2)$ in Section~\ref{S:SplittingIntroSubsec} from $(ii)$ and $(iii)$. 

The divergence condition for $\log w$ reduces to Khrushchev's condition for the set $F$ containing no $h$-Carleson sets of positive measure in the case that $w = 1_F$.

Sharpness of \thref{T:GeneralSplittingThm} has been known since 1970s and Khrushchev's work, modulo the following Dini-type regularity condition on the gauge $h$:
\begin{equation}
    \label{E:Reg3}
    \int_0^{\ell} \frac{h(x)}{x} \, dx \, \simeq \, h(\ell).
     \tag{R3}
\end{equation}
More precisely, if $\{p_N\}_N$ is a sequence of polynomials which satisfies
    \begin{enumerate}[(i')]
        \item $p_N \to 0$ in the topology of $\LL^t(w\,|dz|)$,
        \item the bound $|p_N(z)| \leq \exp\Big( \epsilon \frac{h(1-|z|)}{1-|z|}\Big)$ holds uniformly in $N$, for all $z \in \D$,
    \end{enumerate}
then the integrability of $\log w$ on any $h$-Carleson set of positive measure forces \[ p_N(z) \to 0, \quad z \in \D,\] uniformly on compact subsets of $\D$. The above statement follows from a minor modification of Khrushchev's technique in \cite[Section 3]{khrushchev1978problem}. See, for instance, the discussion in \cite[Section 9]{kriete1990mean}.

In particular, \thref{T:GeneralSplittingThm} confirms the Kriete-MacCluer conjecture from \cite[Section 9]{kriete1990mean}. Going back to the context of $\Po^t(\mu)$-spaces, we state explicitly two corollaries of \thref{T:GeneralSplittingThm} related to the two most commonly appearing Carleson-type sets. We say that a closed set $E \subset \T$ is a classical \textit{Carleson set} if \[ \sum_{k} |I_k| \log(1/|I_k|) < \infty\] where $\{I_k\}_k$ are the maximal arcs complementary to $E$. For $\alpha \in (0,1)$, the set $E$ is an \textit{$\alpha$-Carleson set} if \[ \sum_{k} |I_k|^{\alpha} < \infty.\]
An argument using \thref{T:GeneralSplittingThm} with $h$ defined as in equation \eqref{E:hInTermsOfGeq}, and Khrushchev's original results for the necessity part, yields the following results.

\begin{cor} Let $t \in (0, \infty)$, $\beta > -1$ and
\[ d\mu(z) = (1-|z|)^{\beta} dA(z) + w(z) |dz|.\]
The space $\Po^t(\mu)$ splits if and only if \[ \int_E \log w \, |dz| = -\infty \] for every Carleson set $E$ of positive arclength measure. 
\end{cor}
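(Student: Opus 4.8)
\medskip

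The plan is to derive the corollary directly from \thref{T:GeneralSplittingThm} together with Khrushchev's necessity argument. First observe that for the weight $G(x) = x^\beta$ with $\beta > -1$, the decay is sub-exponential, so formula \eqref{E:hInTermsOfGeq} gives $h(x) = x\log(1/x^\beta) = \beta \, x \log(1/x)$ for small $x$. After rescaling by the harmless constant $\beta$ (which does not affect the $h$-Carleson condition), this is precisely the gauge $h(x) = x\log(1/x)$, whose associated $h$-Carleson sets are exactly the classical Carleson sets $\sum_k |I_k|\log(1/|I_k|) < \infty$. One should check quickly that this $h$ is a genuine gauge function: continuity, monotonicity near $0$, $h(0)=0$, the regularity conditions \eqref{E:Reg1} and \eqref{E:Reg2} are all immediate for $x\log(1/x)$ on a small interval, and $h$ can be extended to all of $[0,\infty)$ in any convenient increasing continuous way (the behavior away from $0$ is irrelevant since only small arcs matter in the $h$-Carleson sum). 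One also verifies the Dini condition \eqref{E:Reg3}, since $\int_0^\ell x^{-1}\cdot x\log(1/x)\,dx = \int_0^\ell \log(1/x)\,dx = \ell\log(1/\ell) + \ell \simeq h(\ell)$; this is what licenses the use of Khrushchev's sharpness statement.

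\medskip

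For the sufficiency direction (``if''), suppose $\int_E \log w\,|dz| = -\infty$ for every Carleson set $E$ of positive measure. By the gauge identification above, this is the hypothesis of \thref{T:GeneralSplittingThm}. Apply that theorem with $t$ as given and $\epsilon = 1/t$ (or any fixed $\epsilon > 0$) to obtain polynomials $\{p_N\}_N$ satisfying $(i)$ $p_N \to 0$ in $\LL^t(w\,|dz|)$, $(ii)$ $p_N \to 1$ uniformly on compacts of $\D$, and $(iii)$ the pointwise bound $|p_N(z)| \leq \exp(\epsilon\, h(1-|z|)/(1-|z|))$. As explained in the paragraph following \thref{T:GeneralSplittingThm}, with this choice of $\epsilon$ and $h$ the bound $(iii)$ is equivalent to $|p_N(z)|^{t} (1-|z|)^\beta \leq 1$ (up to the constant $\beta$ absorbed into $\epsilon$, which only improves matters), so $\int_\D |p_N|^t (1-|z|)^\beta \, dA$ is dominated by $A(\D) < \infty$; combined with $(ii)$ and dominated convergence this yields $p_N \to 1$ in $\LL^t((1-|z|)^\beta dA)$. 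Together with $(i)$ these are exactly conditions $(1)$ and $(2)$ of Section~\ref{S:SplittingIntroSubsec}, so $\Po^t(\mu)$ splits.

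\medskip

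For the necessity direction (``only if''), suppose $\Po^t(\mu)$ splits, i.e.\ there are polynomials with $p_N \to 0$ in $\LL^t(w\,|dz|)$ and $p_N \to 1$ in $\LL^t((1-|z|)^\beta dA)$. One first upgrades convergence in the area norm to a pointwise bound of the form $(ii')$: this is standard, since $\LL^t$-convergence on the disk against the Bergman-type weight $(1-|z|)^\beta dA$ forces local boundedness via the sub-mean-value property of $|p_N|^t$, and one may pass to a subsequence that is uniformly bounded on compacts with $|p_N(z)| \lesssim \exp(\epsilon h(1-|z|)/(1-|z|))$ after a further normalization — this is the routine part of Khrushchev's machinery referenced in \cite[Section 9]{kriete1990mean}. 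Then, assuming for contradiction that $\int_E \log w\,|dz| > -\infty$ for some Carleson set $E$ of positive measure, Khrushchev's sharpness statement quoted in the excerpt (valid because \eqref{E:Reg3} holds for our $h$) forces $p_N \to 0$ uniformly on compacts of $\D$, contradicting $p_N \to 1$. Hence no such $E$ exists.

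\medskip

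The main obstacle is the clean transfer between the two formulations of convergence in the disk: matching condition $(iii)$ of \thref{T:GeneralSplittingThm} (an exponential gauge bound) with the Bergman-weight $\LL^t$-convergence required by the $\Po^t(\mu)$ splitting definition, and in the reverse direction extracting a pointwise gauge bound $(ii')$ from area-norm convergence so that Khrushchev's necessity result applies. Both of these are carried out in the body of the paper for the general statement, so here it is a matter of specializing the gauge $h(x) = \beta x\log(1/x)$ and invoking them; the only genuinely new verification is the elementary check that this $h$ satisfies the hypotheses \eqref{E:Reg1}, \eqref{E:Reg2}, \eqref{E:Reg3} and that its $h$-Carleson sets coincide with the classical Carleson sets.
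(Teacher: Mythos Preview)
Your proposal is correct and follows exactly the route the paper sketches in one line: apply \thref{T:GeneralSplittingThm} with the gauge for classical Carleson sets for sufficiency, and invoke Khrushchev's results for necessity.

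One small correction worth making: for $-1<\beta\le 0$ the formula \eqref{E:hInTermsOfGeq} yields $h(x)=\beta\, x\log(1/x)$, which is not a valid gauge (it is identically zero or negative), so ``rescaling by the harmless constant $\beta$'' fails in that range. Instead take $h(x)=x\log(1/x)$ directly and choose $\epsilon<(1+\beta)/t$ in part (iii), so that $|p_N(z)|^t(1-|z|)^\beta\le(1-|z|)^{\beta-t\epsilon}$ is integrable over $\D$; this is where the hypothesis $\beta>-1$ actually enters. Your necessity sketch via the Bergman point-evaluation estimate $|p_N(z)|\lesssim(1-|z|)^{-(2+\beta)/t}\|p_N\|$ does supply the uniform growth bound $(ii')$ needed to feed into Khrushchev's argument, so that part is fine.
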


It follows that the weight $w$ constructed by Volberg, which was mentioned in Section \ref{S:PartialResSubexpReg} above, is such that it satisfies $w > 0$ almost everywhere, and yet $\log w$ has divergent integral over any Carleson set of positive measure.

\begin{cor} Let $\epsilon > 0$, $\alpha \in (0,1)$ and
\[ d\mu(z) = \exp(-\epsilon (1-|z|)^{\alpha -1}) dA(z) + w(z) |dz|.\]
The space $\Po^t(\mu)$ splits if and only if \[ \int_E \log w \, |dz| = -\infty \] for every $\alpha$-Carleson set $E$ of positive arclength measure. 
\end{cor}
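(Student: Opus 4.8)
The plan is to deduce the corollary from \thref{T:GeneralSplittingThm} together with the sharpness statement recalled just before it, once the correct gauge is identified and one passes between the norm-convergence and the pointwise-bound formulations of splitting. Put $G(x) = \exp(-\epsilon x^{\alpha-1})$, so that $d\mu = G(1-|z|)\,dA + w\,|dz|$ with $w \in \LL^1(|dz|)$, and define $h$ from $G$ via \eqref{E:hInTermsOfGeq}:
\[ h(x) = x\log(1/G(x)) = \epsilon x^{\alpha}. \]
Since $\alpha \in (0,1)$, this $h$ is continuous and increasing with $h(0)=0$, and $h(x)/x = \epsilon x^{\alpha-1}$ is decreasing with $\lim_{x\to 0} h(x)/x = \infty$, so $h$ is a gauge function in the sense required by \thref{T:GeneralSplittingThm}; moreover $h$ satisfies \eqref{E:Reg3} because $\int_0^\ell \epsilon x^{\alpha-1}\,dx = (\epsilon/\alpha)\ell^\alpha \simeq h(\ell)$, while \eqref{E:ExpDec} fails for $G$, so this is genuinely the subexponential regime left open by the earlier theorems. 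Finally, the $h$-Carleson sets coincide with the $\alpha$-Carleson sets, since $\sum_k h(|I_k|) = \epsilon \sum_k |I_k|^{\alpha}$.

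For the ``if'' direction, suppose $\int_E \log w\,|dz| = -\infty$ for every $\alpha$-Carleson set $E$ of positive measure. This is exactly the hypothesis of \thref{T:GeneralSplittingThm} for the gauge $h$, so applying that theorem (with the gauge $h$, the exponent $t$, and $1/t$ in the role of the remaining parameter) yields polynomials $p_N$ with $p_N \to 0$ in $\LL^t(w\,|dz|)$, $p_N \to 1$ uniformly on compact subsets of $\D$, and, by the equivalent form of condition $(iii)$ noted in the remark following \thref{T:GeneralSplittingThm}, $|p_N(z)|^t G(1-|z|) \leq 1$ for all $z\in\D$. Then $|p_N(z)-1|^t G(1-|z|)$ is bounded by a constant multiple of $1 + G(1-|z|) \leq 2$, and $G(1-|z|)\,dA$ is a finite measure on $\D$ on which $p_N \to 1$ pointwise, so dominated convergence gives $\int_\D |p_N-1|^t G(1-|z|)\,dA \to 0$. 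Hence $p_N \to 1$ in $\LL^t(G(1-|z|)\,dA)$ and $p_N \to 0$ in $\LL^t(w\,|dz|)$ simultaneously, which is the splitting of $\Po^t(\mu)$.

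For the ``only if'' direction I argue by contraposition: assume some $\alpha$-Carleson set $E$ of positive measure has $\int_E \log w\,|dz| > -\infty$, and let $\{p_N\}_N$ be any polynomials with $p_N \to 0$ in $\LL^t(w\,|dz|)$ and $p_N \to 1$ in $\LL^t(G(1-|z|)\,dA)$; I must show this cannot happen. From $p_N \to 1$ in $\LL^t(G(1-|z|)\,dA)$ we get $\sup_N \int_\D |p_N|^t G(1-|z|)\,dA < \infty$, and from this uniform bound I will extract a pointwise estimate $|p_N(z)| \leq \exp\big(\epsilon'\, h(1-|z|)/(1-|z|)\big)$, uniformly in $N$, for a suitable $\epsilon'>0$. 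The estimate comes from the sub-mean value inequality for the subharmonic function $|p_N|^t$ over the disk $D\big(z,\delta(1-|z|)\big)$, where the radius fraction $\delta = \delta(1-|z|)$ is allowed to tend to $0$ slowly (say $\delta(x) = 1/\log(1/x)$): bounding $G$ below on that disk by $G\big((1-\delta)(1-|z|)\big)$ and taking logarithms, the losses from $\delta^{-2}$, $(1-|z|)^{-2}$ and the ratio $G(1-|z|)/G\big((1-\delta)(1-|z|)\big)$ each contribute only terms that are $o\big((1-|z|)^{\alpha-1}\big)$ in the exponent, that is, $o\big(h(1-|z|)/(1-|z|)\big)$; on a compact neighbourhood of the origin $p_N$ is uniformly bounded by the same inequality, and the resulting multiplicative constant is absorbed by slightly enlarging $\epsilon'$. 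Now $\{p_N\}_N$ meets hypotheses $(i')$ and $(ii')$ of the sharpness statement for the gauge $h = \epsilon x^{\alpha}$, which satisfies \eqref{E:Reg3}; since $\log w$ is integrable over the $h$-Carleson set $E$ of positive measure, that statement forces $p_N \to 0$ uniformly on compact subsets of $\D$. But then Fatou's lemma gives $\liminf_N \int_\D |p_N-1|^t G(1-|z|)\,dA \geq \int_\D G(1-|z|)\,dA > 0$, contradicting $p_N \to 1$ in $\LL^t(G(1-|z|)\,dA)$. So no such $E$ exists.

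The main obstacle is the passage from the uniform $\LL^t(G(1-|z|)\,dA)$-bound to the pointwise bound $(ii')$ in the ``only if'' direction: the weight $G(x) = \exp(-\epsilon x^{\alpha-1})$ is not doubling in the radial variable near $x=0$ --- indeed $G(x/2)/G(x) \to 0$ as $x \to 0$ --- so sub-mean value disks of a fixed proportion of $1-|z|$ are useless, and one must let the proportion shrink while checking that the accumulated losses stay of lower order than the exponential weight $\exp\big(\epsilon(1-|z|)^{\alpha-1}\big)$. Everything else --- identifying and verifying the gauge, the equivalence of splitting with two-sided convergence, the dominated-convergence step, and the closing Fatou argument --- is routine.
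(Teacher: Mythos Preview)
Your argument is correct and follows the paper's outline: the ``if'' direction is exactly the intended application of \thref{T:GeneralSplittingThm} with $h(x)=\epsilon x^{\alpha}$, and for the ``only if'' direction the paper simply cites Khrushchev's necessity results, whereas you supply the details by reducing to the sharpness statement $(i')$--$(ii')$ via a sub-mean value estimate. That reduction is the right idea and your contradiction via Fatou is clean.

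One correction to your discussion of the ``main obstacle'': disks of a \emph{fixed} proportion $\delta\in(0,1)$ are not useless here. With $x=1-|z|$ the sub-mean value inequality over $D(z,\delta x)$ gives
\[
|p_N(z)|^t \leq \frac{M}{\pi\delta^2 x^2}\,\frac{1}{G((1-\delta)x)} = \frac{M}{\pi\delta^2 x^2}\exp\bigl(\epsilon(1-\delta)^{\alpha-1}x^{\alpha-1}\bigr),
\]
and since the polynomial prefactor $x^{-2/t}$ is $o\bigl(\exp(cx^{\alpha-1})\bigr)$ for every $c>0$, this already yields $|p_N(z)|\leq\exp\bigl(\epsilon' h(x)/x\bigr)$ for any $\epsilon'>(1-\delta)^{\alpha-1}/t$, uniformly in $N$ for $x$ small. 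The sharpness statement only requires \emph{some} $\epsilon'>0$, so no shrinking of $\delta$ is needed; the non-doubling of $G$ costs only the harmless factor $(1-\delta)^{\alpha-1}$ in the exponent. Your shrinking-$\delta$ variant is also valid, just more than is required.
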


In conjunction with Volberg's theorem from \cite{volberg1982logarithm} and the superexponential splitting theorem from \cite{malman2023revisiting}, our \thref{T:GeneralSplittingThm} essentially completes the characterization of the weight pairs $(G,w)$ for which splitting occurs. 

Lastly, we mention that splitting theorems have various applications to Fourier analysis and operator theory. Some of those applications are described in Khrushchev's fundamental work \cite{khrushchev1978problem}, and some other, more recent ones, in \cite{malman2023revisiting} and \cite{malman2023thomson}.

\section{Duality theorem}
\label{S:DualityThmProofSection}

\subsection{Dyadic system} 
\label{S:DyadicSystemSubsec} 
For convenience, we shall assume that our roof function $R$ is supported inside the unit cube $\C^n$ with a vertex at the origin,
\begin{equation}
    \label{UnitCubeDef}
    \C^n := [0,1]^n.
\end{equation}
To the unit cube $\C^n$ in \eqref{UnitCubeDef} we associate the usual dyadic tree
\[
\DD = \bigcup_{m \geq 0} \DD_m, 
\] where $\DD_m$ consists of $2^{nm}$ closed cubes of volume $2^{-nm}$ which have disjoint interiors, and for which we have 
\[ \bigcup_{d \in \DD_m} d = \C^n. \] For $m \geq 1$, every cube $d \in \DD_m$ is contained in a unique \textit{parent} cube $d' \in \DD_{m-1}.$ It is well-known (and readily established) that from every collection $\mathcal{A} \subset \DD$ we may extract a subset $\widetilde{\mathcal{A}}$ of \textit{maximal} cubes, which have pairwise disjoint interiors, and for which we have \[ \bigcup_{d \in \mathcal{A}} d = \bigcup_{d \in \widetilde{\mathcal{A}}}d.\]

\subsection{An almost-solution} We will use an iterative construction to establish the following almost-solution to our problem.

\begin{lem} \thlabel{L:mainlemmapartialsol}
    For each positive integer $m$, there exists a measurable function $f_m \in \FF_n(R, h)$ and a collection $\{d_{m,k}\}_k$ of finitely many dyadic cubes in $\bigcup_{k=0}^{m-1} \DD_k$ with disjoint interiors, for which we have
    \begin{equation} \label{E:MainLemmaProp1}
        3^n \int_{d_{m,k}} f_m \, dV_n = h(V_n(d_{m,k})),
    \end{equation}
    \begin{equation} \label{E:MainLemmaProp2} \sum_{k} h(V_n(d_{m,k})) \leq h(1)
    \end{equation} and 
    \begin{equation} \label{E:MainLemmaProp3} 3^n  \int\displaylimits_{\C^n} f_m \, dV_n = \int\displaylimits_{\C^n \setminus \cup_k d_{m,k}} \min \{ R, 2^{nm} h(2^{-nm}) \} \, dV_n + \sum_{k} h(V_n(d_{m.k})). \end{equation}
\end{lem}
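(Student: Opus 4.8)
The plan is to realize $f_m$ (up to the factor $3^{-n}$) as a maximal flow in a finite capacitated network carried by the dyadic tree truncated at generation $m$. Put $\rho := \min\{R,\,2^{nm}h(2^{-nm})\}$; this is a bounded, everywhere finite, measurable roof with $\int_d \rho\,dV_n \le 2^{nm}h(2^{-nm})\,V_n(d) = h(2^{-nm})$ for every $d\in\DD_m$. I would build a network with a source $s$, a sink $t$, and one node for each $d\in\bigcup_{k=0}^m\DD_k$: an edge $s\to\C^n$, edges of infinite capacity from each $d\in\DD_k$ with $k<m$ to its $2^n$ children, and an edge $d\to t$ of capacity $\int_d\rho\,dV_n$ for each leaf $d\in\DD_m$; in addition I impose a \emph{node capacity} $h(V_n(d))$ at every dyadic node $d$ (formally, split $d$ into $d_{\mathrm{in}}\to d_{\mathrm{out}}$ with that capacity), with the source edge $s\to\C^n$ playing the role of the node capacity $h(1)=h(V_n(\C^n))$ of the root.

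Next I would fix a maximal flow $\varphi$ (the feasible flows form a compact convex set since the finite node capacities bound the value, and alternatively $\varphi$ is obtained by an explicit greedy filling of the leaves, a primitive instance of Ford--Fulkerson) and let $S$ be the set of nodes reachable from $s$ in the residual graph. The edges from $S$ to its complement form a minimal cut, every edge of which is saturated; since child edges have infinite capacity the cut consists only of node-capacity edges and leaf-to-sink edges. After replacing any leaf-level node-capacity edge in the cut by the equally saturated leaf-to-sink edge at the same leaf, the node-capacity edges in the cut correspond (by the tree structure, they form an antichain) to a finite family $\{d_{m,k}\}_k$ of cubes in $\bigcup_{k=0}^{m-1}\DD_k$ with pairwise disjoint interiors, and the leaf-to-sink edges in the cut are exactly those $d\to t$ with $d\in\DD_m$ not contained in any $d_{m,k}$. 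Writing $\varphi(d)\le\int_d\rho\,dV_n$ for the flow leaving a leaf $d\in\DD_m$, I set $f_m:=\frac{\varphi(d)}{3^n\int_d\rho\,dV_n}\,\rho$ on $d$ (and $f_m:=0$ on $d$ when $\int_d\rho\,dV_n=0$). Then $0\le f_m\le\frac1{3^n}\rho\le R$, giving condition (ii) of $\FF_n(R,h)$; summing over leaves, $3^n\int_{\C^n}f_m\,dV_n$ equals the value of $\varphi$, which by max-flow--min-cut equals $\sum_k h(V_n(d_{m,k}))+\int_{\C^n\setminus\cup_k d_{m,k}}\rho\,dV_n$, which is \eqref{E:MainLemmaProp3}; saturation of the node-capacity edge at $d_{m,k}$ says the flow through $d_{m,k}$, namely $\sum_{d\in\DD_m,\,d\subseteq d_{m,k}}\varphi(d)=3^n\int_{d_{m,k}}f_m\,dV_n$, equals $h(V_n(d_{m,k}))$, which is \eqref{E:MainLemmaProp1}; and since the value of $\varphi$ is at most the root capacity $h(1)$, \eqref{E:MainLemmaProp3} forces $\sum_k h(V_n(d_{m,k}))\le 3^n\int_{\C^n}f_m\,dV_n\le h(1)$, which is \eqref{E:MainLemmaProp2}.

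It then remains to check condition (i), namely $\int_c f_m\,dV_n\le h(V_n(c))$ for every cube $c$. First, for each $d\in\bigcup_{k=0}^m\DD_k$ the flow through $d$ never exceeds its node capacity, so $\int_d f_m\,dV_n=\frac1{3^n}\cdot(\text{flow through }d)\le\frac1{3^n}h(V_n(d))$. Now let $c$ have side length $\ell$; since $f_m$ is supported in $\C^n$ we may assume $\ell<1$. If $\ell\le2^{-m}$, then $\int_c f_m\,dV_n\le\frac1{3^n}\int_c\rho\,dV_n\le\frac1{3^n}2^{nm}h(2^{-nm})V_n(c)\le\frac1{3^n}h(V_n(c))$, the last step by \eqref{E:Reg1} and $V_n(c)\le 2^{-nm}$. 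If $2^{-m}<\ell<1$, pick the integer $m'$ with $2^{-m'}\le\ell<2^{-m'+1}$; then $1\le m'\le m$ and $c$ meets at most $3^n$ cubes of $\DD_{m'}$ (each side of $c$ meets at most three dyadic intervals of length $2^{-m'}$), whence
\[
\int_c f_m\,dV_n\le\sum_{d\in\DD_{m'},\,d\cap c\ne\emptyset}\int_d f_m\,dV_n\le 3^n\cdot\frac1{3^n}h(2^{-nm'})=h(2^{-nm'})\le h(\ell^n)=h(V_n(c)).
\]
Together with (ii) this shows $f_m\in\FF_n(R,h)$ and completes the argument.

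I expect the delicate point to be the calibration of the network so that the minimal cut yields precisely the expression in \eqref{E:MainLemmaProp3}: the factor $3^n$ must enter as a uniform deflation of the flow rather than inside the capacities, exactly so that the dyadic bound $\int_d f_m\,dV_n\le 3^{-n}h(V_n(d))$ upgrades — via the elementary $3^n$-fold covering of an arbitrary cube by comparable dyadic cubes, together with \eqref{E:Reg1} for very small cubes — to the full system of cube constraints defining $\FF_n(R,h)$, all while the pointwise bound $f_m\le R$ stays in force. One also has to select the cut associated with the residual graph of a maximal flow (so that every cut edge is saturated, which is what turns \eqref{E:MainLemmaProp1} into an equality) and push leaf-level cuts onto the leaf-to-sink edges so that the bottleneck cubes land in $\bigcup_{k=0}^{m-1}\DD_k$.
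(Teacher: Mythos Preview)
Your argument is correct, and it is precisely the network-flow route the paper itself flags as a valid alternative just before its proof (and which it traces back to Bishop--Peres). The paper chose instead to run the algorithm by hand: it starts from $f_m^{\{m\}}=\rho$ and successively rescales on each $d\in\DD_{k-1}$ so that $\int_d f_m^{\{k-1\}}\le h(V_n(d))$, then sets $f_m=3^{-n}f_m^{\{0\}}$ and takes the bottleneck family to be the maximal cubes where a genuine rescaling ($c_d<1$) occurred. That iterative rescaling is exactly the greedy tree max-flow computation, and the maximal $c_d<1$ cubes are exactly a min cut; so at the level of ideas the two proofs coincide, and the verification of $f_m\in\FF_n(R,h)$ via the $3^n$-cover and \eqref{E:Reg1} is identical in both.

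What your presentation buys is a clean conceptual separation: the three displayed properties \eqref{E:MainLemmaProp1}--\eqref{E:MainLemmaProp3} become immediate translations of ``cut edges are saturated'', ``max-flow $=$ min-cut'', and ``flow value $\le$ root capacity'', rather than facts to be checked inductively. What the paper's hands-on construction buys is self-containment (no appeal to Ford--Fulkerson) and an explicit formula for $f_m$ as a product of local scaling factors times $\rho$. Your care in pushing level-$m$ node-capacity cuts onto the (equally saturated) leaf-to-sink edges so that the $d_{m,k}$ land in $\bigcup_{k=0}^{m-1}\DD_k$, and your observation that the residual cut is an antichain because the tree has unique parent paths, are exactly the points that need attention in this approach, and you handle them correctly.
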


For the reader familiar with the theory of combinatorial optimization, we mention that with the correct interpretation, the above lemma can be proved using the Ford-Fulkerson maximal flow theorem, initially established in \cite{ford1956maximal}. To apply the Ford-Fulkerson theorem, we would need to associate to our dyadic tree $\DD$ a graph $G$ with appropriately weighted edge set, and then construct $f_m$ from a \textit{maximal flow} running through $G$, and the set $\{d_{m,k}\}_k$ from a \textit{minimal cut}. A version of this construction appears in the proof of Frostman's lemma given by Bishop and Peres in \cite{bishop2017fractals}. This procedure is somewhat long, and we will not prove the lemma in this way. The special structure of our problem is such, that there is a shorter and direct solution. In the proof below, we shall imitate the Ford-Fulkerson maximum flow algorithm. The dyadic cubes $\{d_{m,k}\}_k$ in \thref{L:mainlemmapartialsol} will appear as bottlenecks when running the algorithm. The argument below certainly has bits in common with well-known proofs of Frostman's lemma (for instance, the one given in \cite[Chapter 8]{mattila1999geometry}), but the classical approach does not take us all the way to \thref{T:Maindualitytheorem}.

\begin{proof}[Proof of \thref{L:mainlemmapartialsol}] Let an integer $m \geq 1$ be fixed, and introduce the measurable function
\begin{equation}
    \label{E:fmInitDef}
    f_m^{\{m\}} := \min \{ R , 2^{nm} h(2^{-nm})\} \text{ on } \C^n.
\end{equation} We shall iteratively construct a sequence of functions \[ f_m^{\{m\}} \geq f_m^{\{m-1\}} \geq \ldots \geq f_m^{\{0\}}\] with the above inequalities interpreted as holding pointwise up to a set of $V_n$-measure zero. In the end, we will set $f_m$ to be a scalar multiple of $f^{\{0\}}_m$.

We have just constructed $f^{\{k\}}_m$ for $k = m$. Next we show how to construct $f^{\{k-1\}}_m$ given $f^{\{k\}}_m$, for every $k \in \{1,\ldots, m\}$. Let $f_m^{\{k\}}|d$ be the function which coincides with $f_m^{\{k\}}$ on the interior of a dyadic cube $d$ and vanishes elsewhere. Then the equality \[f_m^{\{k\}} = \sum_{ d \in \DD_{k-1}} f_m^{\{k\}}|d\] holds pointwise except for a set of $V_n$-measure zero. For $d\in \DD_{k-1}$, introduce the non-negative constants
\begin{equation} \label{E:cdConstdef}
c_d = \begin{cases}
1, & \text{ if } \int_d f^{\{m\}}_m \, dV_n \leq h(V_n(d)), \\  
\frac{h(V_n(d))}{\int_d f^{\{m\}}_m \, dV_n}, & \text{ otherwise.}
\end{cases}
\end{equation}
Note that $c_d \leq 1$ for all $d \in \DD_{k-1}$. We set
\begin{equation}
f_m^{\{k-1\}} := \sum_{d \in \DD_{k-1}} c_d f_m^{\{k\}}|d.
\end{equation} Then $f_m^{\{k\}} \geq f_m^{\{k-1\}}$. By construction, we have 
\begin{equation} \label{E:DkIneq}
\int_d f_m^{\{k-1\}} \, dV_n \leq h(V_n(d)), \quad d \in \DD_{k-1}\end{equation} and, importantly, we have equality in \eqref{E:DkIneq} if $c_d < 1$. We iterate the construction until we reach $k=0$, and next we check how far $f_m^{\{0\}}$ is from being a member of $\FF_n(R,h)$. Since \[ f_m^{\{0\}} \leq f_m^{\{m\}} \leq R\] holds almost everywhere on $\C^n$, clearly condition $(ii)$ for membership in $\FF_n(R,h)$ stated in \thref{D:FclassDef} is satisfied. We proceed to check condition $(i)$. Let $c$ be a cube, and assume first that $V_n(c) \leq 2^{-nm}$. Then we can use that $h(x)/x$ is decreasing in $x$ to estimate 
\begin{align*}
\int_c f_m^{\{0\}} \, dV_n &\leq \int_c f_m^{\{m\}}\, dV_n \\ 
&\leq V_n(c) \cdot 2^{nm}h(2^{-nm}) \\
&\leq V_n(c) \cdot V_n(c)^{-1} h(V_n(c)) \\
&= h(V_n(c)).
\end{align*} 
In the case that $V_n(c)> 2^{-nm}$, let $r$ be the unique integer smaller than or equal to $m$ such that $2^{-r} \leq V_n(c)^{1/n} \leq 2^{-r+1}$. Then $c$ is contained in the union of at most $3^n$ cubes in $\DD_{r}$. Consequently, by our construction, and in particular by \eqref{E:DkIneq}, we have
\begin{align*}
    \int_c f_m^{\{0\}} \, dV_n &\leq \int_c f_m^{\{r\}} \, dV_n \\
    &\leq 3^n h(2^{-{rn}}) \\
    &\leq 3^n h(V_n(c)).
\end{align*}
It follows that if we set \[ f_m := 3^{-n} f_m^{\{0\}},\] then $f_m \in \FF_n(R, h)$.

We now construct the collection $\{d_{m,k}\}_k$. Consider those dyadic cubes in $\bigcup_{k=1}^{m-1} \DD_k$ for which $c_d < 1$ in \eqref{E:cdConstdef}, and let $\{d_{m,k}\}_k$ be the maximal subset of those cubes, as described in Section \ref{S:DyadicSystemSubsec}. The maximality property implies that for any cube $d'$ strictly containing one of the $d_{m,k}$ we must have $c_{d'} = 1$, which by our construction means that if $d_{m,k} \in \DD_r$, then 
\begin{equation}
    \label{E:bottleNeckEq}
    3^n \int_{d_{m,k}} f_m \, dV_n = \int_{d_{m,k}} f^{\{r\}}_m \, dV_n = h(V_n(d_{m,k})),
\end{equation}
where the second equality holds since we have equality in \eqref{E:DkIneq} whenever $c_d < 1$. Hence property \eqref{E:MainLemmaProp1} holds. If $x \in \C^n$ is not contained in $\bigcup_k d_{m,k}$, then it follows that $c_d = 1$ for each of the dyadic cubes $d$ containing $x$. Thus $3^n f_m(x) = f_m^{\{m\}}(x)$ for $x \in \C^n \setminus \bigcup_k d_{m,k}$, and so property \eqref{E:MainLemmaProp3} holds as a consequence of \eqref{E:MainLemmaProp1} and \eqref{E:fmInitDef}. Finally, since by construction we have 
\[\int_{\C^n} f_m^{\{0\}}\, dV_n \leq h(V_n(\C^n)) = h(1),\] it follows from \eqref{E:DkIneq} in the case $k=1$, and from \eqref{E:bottleNeckEq}, that 
\begin{align*}
    \sum_k h(V_n(d_{m,k})) &= \sum_k 3^n\int_{d_{m,k}} f_m \, dV_n  \\
    &\leq \int_{\C^n} f_m^{\{0\}}\, dV_n \\
    &\leq h(1),
\end{align*}  which establishes \eqref{E:MainLemmaProp2}.
\end{proof}

\subsection{Bottleneck compactness}

The bottleneck sets $\{d_{m,k}\}_k$ from \thref{L:mainlemmapartialsol} obey the following compactness property, critical in the proof of our main theorem.

\begin{lem} \thlabel{L:BottleneckCompactness}
For each integer $m \geq 1$, let $\{d_{m,k}\}_k$ be a sequence of dyadic cubes in $\C^n$, where within each sequence the cubes have disjoint interiors, and for which we have 
\[ \sup_{m} \sum_k h(V_n(d_{m,k})) < \infty\] 
and
\[0 < \inf_m \sum_{k} V_n(d_{m,k}).\] Then there exists a positive number $L$, a sequence of dyadic cubes $\{d_k\}_k$ with disjoint interiors satisfying \[ \sum_k V_n(d_k) = L\] and a subsequence $\{m'\}$ of the index parameters \{m\} such that \[ \lim_{m' \to \infty} \sum_k V_n(d_{m',k}) = L\] and \[ d_{m',k} = d_k\] for all sufficiently large $m'$.
\end{lem}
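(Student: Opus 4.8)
The plan is to exploit the regularity hypothesis \eqref{E:Reg2} to show that, uniformly in $m$, essentially all of the volume $\sum_k V_n(d_{m,k})$ is carried by cubes of bounded dyadic generation, thereby collapsing the problem to a pigeonhole over finitely many configurations at each scale, followed by a diagonalization. First I would record the tail estimate. Put $C := \sup_m \sum_k h(V_n(d_{m,k})) < \infty$. If $V_n(d_{m,k}) \leq \delta$, then \eqref{E:Reg1} (monotonicity of $h(x)/x$) gives $h(V_n(d_{m,k})) \geq \tfrac{h(\delta)}{\delta} V_n(d_{m,k})$, whence, summing over those cubes in the $m$-th family of volume at most $\delta$,
\[
\sum_{k \, : \, V_n(d_{m,k}) \leq \delta} V_n(d_{m,k}) \ \leq \ \frac{\delta}{h(\delta)} \sum_k h(V_n(d_{m,k})) \ \leq \ \frac{C \delta}{h(\delta)},
\]
and by \eqref{E:Reg2} the right-hand side tends to $0$ as $\delta \to 0$, independently of $m$. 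Since a dyadic cube of generation exceeding $J$ has volume at most $2^{-n(J+1)}$, this yields: for each $\epsilon > 0$ there is $J(\epsilon)$ such that for every $m$ the cubes $d_{m,k}$ of generation greater than $J(\epsilon)$ contribute at most $\epsilon$ to $\sum_k V_n(d_{m,k})$.

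Next I would run the diagonalization. For each $J$ the set $\DD^{(J)} := \bigcup_{j=0}^{J} \DD_j$ is finite and so has finitely many subsets; writing $B_m := \{d_{m,k}\}_k$ and $B_m^{(J)} := B_m \cap \DD^{(J)}$, a standard diagonal argument produces a subsequence $\{m'\}$ along which, for every $J$, $B_{m'}^{(J)}$ is eventually constant, equal to some set $B^{(J)}$. These sets are consistent, $B^{(J)} = B^{(J+1)} \cap \DD^{(J)}$, so $\{d_k\}_k := \bigcup_J B^{(J)}$ is a well-defined countable family of dyadic cubes; it has pairwise disjoint interiors because any finite subfamily lies in some $B^{(J)} = B_{m'}^{(J)} \subseteq B_{m'}$, which has disjoint interiors by hypothesis. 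Setting $V^{(J)} := \sum_{d \in B^{(J)}} V_n(d)$, the $B^{(J)}$ increase and sit inside $\C^n$, so $V^{(J)} \uparrow L := \sum_k V_n(d_k) \in [0,1]$.

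Finally I would verify the convergence claims. Fix $\epsilon > 0$, take $J = J(\epsilon)$ from the first step, and choose $m'$ large enough that $B_{m'}^{(J)} = B^{(J)}$. Splitting $\sum_k V_n(d_{m',k})$ into its part inside $\DD^{(J)}$, which equals $V^{(J)}$, and its high-generation part, which is $\leq \epsilon$, and noting that the same tail bound applied along the subsequence also forces $L - V^{(J)} \leq \epsilon$, one gets $\big| \sum_k V_n(d_{m',k}) - L \big| \leq \epsilon$ for all large $m'$; hence $\sum_k V_n(d_{m',k}) \to L$. Positivity $L > 0$ is then immediate from $\sum_k V_n(d_{m',k}) \geq \inf_m \sum_k V_n(d_{m,k}) > 0$. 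Enumerating $\{d_k\}_k$ with lower-generation cubes first and relabelling each $\{d_{m',k}\}_k$ compatibly (harmless, since the assertion concerns the families, not their indexing), the stabilization of $B_{m'}^{(J)}$ for every $J$ gives $d_{m',k} = d_k$ for each fixed $k$ once $m'$ is large.

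The one genuinely substantive ingredient is the tail estimate in the first step: condition \eqref{E:Reg2} is precisely what prevents volume from leaking off to arbitrarily fine scales, and this is what reduces the problem to a finite pigeonhole. Everything afterwards — the diagonal extraction, the consistency of the truncations, and the volume bookkeeping — is routine, and I do not anticipate real difficulty there.
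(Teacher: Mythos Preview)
Your argument is correct, and it proceeds somewhat differently from the paper's. Both proofs rest on the same core observation---that \eqref{E:Reg1} and \eqref{E:Reg2} together prevent volume from accumulating on cubes of arbitrarily fine generation---but they deploy it in different ways. The paper orders each family $\{d_{m,k}\}_k$ by decreasing volume, shows that the largest cube has volume bounded away from zero (using \eqref{E:Reg1}--\eqref{E:Reg2}), extracts a stable $d_1$ by pigeonhole, and then iterates index by index; a second invocation of the same estimate is needed at the end to show $L_k \to 0$. You instead isolate the estimate once and for all as a uniform tail bound (cubes of generation exceeding $J(\epsilon)$ carry total volume at most $\epsilon$, uniformly in $m$), after which the extraction is a single clean diagonalization over the finite truncations $\DD^{(J)}$, and the volume convergence and positivity of $L$ fall out immediately. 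Your organization is arguably tidier: the analytic content is quarantined in the first paragraph, and everything downstream is soft. The paper's approach, by contrast, makes the role of the volume ordering more explicit, which matches how the lemma is later used (tails $T_m = \bigcup_{k \geq k(m)} d_{m,k}$ in the proof of \thref{T:Maindualitytheorem}). Both routes require the same mild relabelling at the end, and both are slightly informal on that point; neither is problematic.
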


In the statement above, and in the proof below, we allow for the case of degenerated empty cubes $d_{m,k} = \emptyset$.

\begin{proof}[Proof of \thref{L:BottleneckCompactness}]
    By the second assumption in the statement of the lemma and by passing to a subsequence of the index parameter $m$, we may suppose that 
    \[\lim_m \sum_{k} V_n(d_{m,k}) = L \in (0,1].\]
    Without loss of generality, we may order each of the sequences $\{d_{m,k}\}_k$ so that the volumes $V_n(d_{m,k})$ are non-increasing in $k$, that is,
    \[ V_n(d_{m, 1}) \geq V_n(d_{m, 2}) \geq \ldots \]
    We claim that \[ \liminf_m \, V_n(d_{m,1}) > 0,\] that is, the largest cube in each of the families indexed by $m$ has volume bounded away from $0$. For if not, then for any $\epsilon > 0$ we could find a large index parameter $m_0$ such that $V_n(d_{m_0,1}) < \epsilon$, from which our ordering assumption would imply that $V_n(d_{m_0,k}) < \epsilon$ for all $k \geq 1$. We would obtain
    \begin{align*}
        \sum_k h(V_n(d_{m_0,k})) &= \sum_k V_n(d_{m_0,k})\frac{h(V_n(d_{m_0,k}))}{V_n(d_{m_0,k})} \\
        &\geq \Big( \sum_k V_n(d_{m_0,k})\Big) \frac{h(\epsilon)}{\epsilon} \\
        &\geq \frac{L}{2 }\frac{h(\epsilon)}{\epsilon},
    \end{align*} where the estimate between the first and second lines is a consequence of the assumption \eqref{E:Reg1} on $h$ from Section~\ref{S:PrimalSubsec}, and the volume estimate in the last line holds if $m_0$ is sufficiently large. By \eqref{E:Reg2}, the last quantity can be made arbitrarily large by choice of $\epsilon$ sufficiently small, which would contradict our first hypothesis in the statement of the lemma.
    
    Thus $\liminf_m V_n(d_{m,1}) > 0$, and it follows that each of the cubes in the sequence $\{d_{m,1}\}_m$ is a member of the finite subset of dyadic cubes in $\DD$ of a volume bounded from below by some constant. But then by passing to a subsequence of the index parameter $m$ again, we may assume that in fact $\{d_{m,1}\}_m$ is a eventually a constant sequence: \[d_{m,1} = d_1, \quad \text{ for large } m.\]
    Let $L_1 := L - V_n(d_1).$ If $L_1 = 0$, we set $d_k = \emptyset$ for each $k \geq 2$, and terminate the algorithm. If not, then we may repeat our previous argument with sequences $\{d_{m,k}\}_{k \geq 2}$ (that is, the original sequences with first element removed) and $L_1$ in place of $L$. By passing to a subsequence we may, again, assume that the sequence $\{d_{m,2}\}_m$ is eventually constant and equal to some dyadic cube $d_2$. Repeating the argument for consecutive indices $k=2,3, \ldots$ we obtain a sequence of numbers \[ L_k := L - \sum_{s=1}^k V_n(d_s) \geq 0\] and the sequence $\{d_{m,k}\}_m$ is eventually constant, and equal to $d_k$, for each $k$.

    If $L_k > 0$ for all $k$, then to finish the proof we must now show that in fact we must have $\lim_k L_k = 0$. The argument is similar to one given above. If not, then for some positive $\alpha$ we have \[\alpha < L - \sum_{s=1}^\infty V_n(d_s).\] Hence \[ \sum_{s=1}^\infty V_n(d_s) < L - \alpha,\] and, of course, we have \[ \lim_{s \to \infty} V_n(d_s) = 0.\] Given $\epsilon > 0$, choose $s_0$ so large that $V_n(d_s) < \epsilon/2$ for all $s \geq s_0$. For all sufficiently large $m_0$, we will then also have $V_n(d_{m_0,s}) < \epsilon$ for all $s \geq s_0$. Moreover, since we have \[ \lim_m \sum_{s=1}^{s_0} V_n(d_{m,s}) = \sum_{s=1}^{s_0} V_n(d_{s}),\] we can choose $m_0$ so large that\[ \sum_{s=1}^{s_0} V_n(d_{m_0,s}) < L - \alpha,\] and \[ \sum_{s=1}^\infty V_n(d_{m_0,s}) > L - \alpha/2.\] Then \[ \sum_{s=s_0+1}^\infty V_n(d_{m_0,s}) > \alpha/2.\] But, similarly to above
    \begin{align*}
        \sum_{s=s_0+1}^\infty h(V_n(d_{m_0,s})) &= \sum_{s=s_0+1}^\infty V_n(d_{m_0,s})\frac{h(V_n(d_{m_0,s}))}{V_n(d_{m_0,s})} \\
        &\geq \Big(\sum_{s=s_0+1}^\infty V_n(d_{m_0, s})\Big) \frac{h(\epsilon)}{\epsilon} \\
        &\geq \frac{\alpha}{2}\frac{h(\epsilon)}{\epsilon}.
    \end{align*} and we arrive at a contradiction as before.
\end{proof}

\subsection{Proof of the duality theorem}

Before diving into the final proof, we shall explicitly state and prove one elementary integral convergence lemma.

\begin{lem}\thlabel{L:IntOverConv}
Let $R: \C^n \to [0,\infty]$ be measurable, $\{N_m\}_m$ be an increasing sequence of positive numbers growing to $\infty$, $\{\alpha_m\}_m$ be a sequence of measurable sets with $V_n(\alpha_m) \to 0$, and $E \subset \C^n$ be a set of positive volume measure. Then
\[ \lim_m \int_{E \setminus \alpha_m} \min(R, N_m) \, dV = \int_E R \, dV_n.\]
\end{lem}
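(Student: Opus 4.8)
The plan is to prove the identity by splitting the integrand into two competing errors and controlling each with a classical convergence theorem. Write
\[
\int_E R \, dV_n - \int_{E \setminus \alpha_m} \min(R, N_m) \, dV_n = \int_{\alpha_m \cap E} \min(R,N_m)\,dV_n + \int_{E \setminus \alpha_m} \big( R - \min(R,N_m)\big) \, dV_n,
\]
where all integrals make sense in $[0,\infty]$ since the integrands are non-negative. If $\int_E R\,dV_n = \infty$ the claim is that the left side of the excerpt's identity is also $+\infty$; otherwise $R \in \LL^1(E)$ and both error terms on the right tend to $0$. I would treat the two cases separately, starting with the finite case.

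First, assume $R \in \LL^1(E, V_n)$. For the second error term, note $0 \le R - \min(R,N_m) = (R - N_m)^+ \le R \cdot \mathbbm{1}_{\{R > N_m\}}$ pointwise; since $N_m \to \infty$, the set $\{R > N_m\}$ shrinks and $R\,\mathbbm{1}_{\{R>N_m\}} \to 0$ $V_n$-a.e., so dominated convergence (with dominating function $R \in \LL^1(E)$) gives $\int_E (R-N_m)^+ \, dV_n \to 0$, hence a fortiori the integral over $E \setminus \alpha_m$ tends to $0$. For the first error term, $\int_{\alpha_m \cap E} \min(R,N_m)\,dV_n \le \int_{\alpha_m \cap E} R \, dV_n$, and since $V_n(\alpha_m) \to 0$ and $R \in \LL^1(E)$, absolute continuity of the integral forces $\int_{\alpha_m \cap E} R\,dV_n \to 0$. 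Combining, the difference above tends to $0$, which is the claim.

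Second, suppose $\int_E R \, dV_n = \infty$. Here I want to show $\int_{E \setminus \alpha_m} \min(R,N_m)\,dV_n \to \infty$. Fix any large $T > 0$; I will produce $m$ with the integral exceeding $T$. Since $\int_E R \, dV_n = \infty$ and $\min(R,N)\uparrow R$ as $N \to \infty$, by monotone convergence there is $N$ with $\int_E \min(R,N)\,dV_n \ge T + 1$. The function $\min(R,N)$ is bounded, so $\int_{\alpha_m \cap E} \min(R,N)\,dV_n \le N \cdot V_n(\alpha_m) \to 0$; pick $m$ large enough that $N_m \ge N$ and $V_n(\alpha_m) < 1/N$. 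Then
\[
\int_{E \setminus \alpha_m} \min(R, N_m)\,dV_n \ge \int_{E \setminus \alpha_m} \min(R, N)\,dV_n = \int_E \min(R,N)\,dV_n - \int_{\alpha_m \cap E}\min(R,N)\,dV_n \ge (T+1) - 1 = T.
\]
Since the integrals $\int_{E\setminus\alpha_m}\min(R,N_m)\,dV_n$ need not be monotone in $m$, to conclude the limit is $+\infty$ I would run this argument for each $T$ and note it shows $\liminf_m \int_{E\setminus\alpha_m}\min(R,N_m)\,dV_n \ge T$ for every $T$; actually it is cleaner to observe that once $N_m \ge N$ and $V_n(\alpha_m) < 1/N$ hold they continue to hold for all larger $m$ (as $N_m$ is increasing and we may, if needed, also shrink along a tail where $V_n(\alpha_m)$ is small — more carefully, fix $N$, then for all $m$ with $N_m \ge N$ we get the bound $\int_{E\setminus\alpha_m}\min(R,N_m)\,dV_n \ge \int_E\min(R,N)\,dV_n - N V_n(\alpha_m)$, whose $\liminf$ as $m\to\infty$ is $\ge \int_E \min(R,N)\,dV_n \ge T+1$), hence the full limit is $+\infty = \int_E R\,dV_n$.

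The only mild subtlety — and the one point I would be careful about — is the non-monotonicity in $m$ of the truncated integrals in the divergent case, which is why I phrase the lower bound as a clean $\liminf$ estimate valid for every threshold $T$ rather than chasing a single subsequence; everything else is a routine application of dominated convergence and absolute continuity of the Lebesgue integral. No compactness or measure-theoretic machinery beyond these standard facts is needed.
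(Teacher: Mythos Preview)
Your argument is essentially correct, but there is a slip in the opening decomposition: the identity should read
\[
\int_E R \, dV_n - \int_{E\setminus\alpha_m}\min(R,N_m)\,dV_n \;=\; \int_{\alpha_m\cap E} R\,dV_n + \int_{E\setminus\alpha_m}\big(R-\min(R,N_m)\big)\,dV_n,
\]
with $R$, not $\min(R,N_m)$, in the first term on the right. As written, your right-hand side is only a \emph{lower} bound for the left, so showing it tends to $0$ would not by itself force the left-hand side to $0$. The fix is immediate, since the very estimate you invoke for that term --- $\int_{\alpha_m\cap E} R\,dV_n \to 0$ by absolute continuity of the integral of $R\in\LL^1(E)$ --- is precisely the correct first term. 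With that correction your Case~1 goes through, and Case~2 is fine as stated.

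Compared to the paper: the paper's proof is shorter and avoids the finite/infinite case split altogether. It observes that $\int_{E\setminus\alpha_m}\min(R,N_m)\,dV_n \leq \int_E R\,dV_n$ is trivial, and for the reverse inequality fixes a single threshold $N$, uses absolute continuity of the bounded measure $\min(R,N)\,dV_n$ to obtain
\[
\int_E \min(R,N)\,dV_n \;\leq\; \liminf_m \int_{E\setminus\alpha_m}\min(R,N_m)\,dV_n,
\]
and then lets $N\to\infty$. This one-sided $\liminf$ argument covers the integrable and non-integrable cases simultaneously. Your Case~2 is effectively a rediscovery of this idea; your Case~1 substitutes a more hands-on dominated-convergence computation that works but is unnecessary once the uniform $\liminf$ bound is in hand.
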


\begin{proof} The "$\leq$" inequality is obvious. We prove the reverse. For every fixed positive number $N$, we have \[ \lim_{m} \int_{\alpha_m}\min(R, N) \, dV_n = 0 \] by absolute continuity of the finite measure $\min(R,N)dV_n$. Thus 
\begin{align*}
\int_E \min(R,N)\, dV_n &=  \lim_m \int_{E\setminus \alpha_m} \min(R,N) \, dV_n  \\
&\leq \liminf_m \int_{E\setminus \alpha_m} \min(R,N_m)\, dV_n.
\end{align*}
The result follows from the above inequality upon letting $N \to \infty$.
\end{proof}

\begin{proof}[Proof of \thref{T:Maindualitytheorem}] Let $\{f_m\}_m$ and $\B_m := \cup_{k} d_{m,k}$ be the almost-solutions and the corresponding bottleneck sets given by \thref{L:mainlemmapartialsol}.

We are done if we can prove that $M_h(R) \leq 3^n \cdot \limsup_{m} \|f_m\|_1$, where as before, $M_h(R)$ is given by \eqref{E:GenHausdorffContDef}. 

We split the proof of this inequality into three cases.

\textbf{Case 1:} we have access to a subsequence of small bottleneck sets, namely, it holds that $\liminf_m V_n(\B_m) = 0$.

In this case, we may assume that $\lim_{m} V_n(\B_m) = 0$. By applying equation \eqref{E:MainLemmaProp3} and \thref{L:IntOverConv} with $N_m = 2^{nm} h(2^{-nm})$ -- which is an increasing sequence as a consequence of \eqref{E:Reg2} -- and $\B_m = \alpha_m$, we obtain
\begin{align*}
\limsup_m \, 3^n \|f_m\|_1  &= \limsup_m \int\displaylimits_{\C^n \setminus \B_m} \min \{ R, 2^{nm} h(2^{-nm}) \} \, dV_n + \sum_{k} h(V_n(d_{m.k})) \\
&\geq \int_{\C^n} R \, dV_n \\
&\geq M_h(R).
\end{align*} 

\textbf{Case 2:} we have access to large bottleneck sets, namely, it holds that $\limsup_m V_n(\B_m) = 1$. 

In this case, the claim follows from subadditivity of the gauge $h$. Indeed, we have  
\begin{align*}
\limsup_m \, 3^n \|f_m\|_1  &\geq \limsup_m \sum_{k} h(V_n(d_{m.k})) \\
&\geq \limsup_m h \left( \sum_k V_n(d_{m,k} )\right) \\
&\geq h(1).
\end{align*}
Since, by our initial assumption, $\C^n$ covers the support of $R$, we have \[h(1) = h(V_n(\C^n)) \geq M_h(R).\]

\textbf{Case 3:} the bottleneck volumes $V_n(\B_m)$ are uniformly bounded away from $0$ and $1$.

In this case we need to use our bottleneck compactness result from \thref{L:BottleneckCompactness}. We apply it to the sequences $\{d_{m,k}\}_k$ and obtain a corresponding sequence $\{d_k\}_k$ of cubes. Let $E := \C^n \setminus \bigcup_k d_k$. After passing to a subsequence, we may assume that $d_{m,k} = d_k$ for all sufficiently large $m$, and that $V_n(\B_m) \to L \in (0,1)$. The sets \[ E_m := E \setminus \bigcup_k d_{m,k}\] are asymptotically not much smaller than $E$. More precisely, the set $E_m$ is obtained from $E$ by removing a tail $T_m := \bigcup_{k \geq k(m)} d_{m,k}$ from $E$, where $k(m)$ is the largest integer such that $d_{m,k} = d_k$ for all $k < k(m)$. Since for each integer $N$, we may choose $m$ such that $d_{m',k} = d_k$ for all $k<N$ and $m' \geq m$, the cut-off integer $k(m) \to \infty$ as $m \rightarrow \infty$. 


Now, since \[ \lim_{m} \sum_k V_n(d_{m,k}) = \sum_k V_n(d_k) = L\] and \[\lim_m V_n(d_{m,k}) = V_n(d_k), \text{ for all } k,\] an elementary convergence argument shows that \[ \limsup_m \sum_{k \geq k(m)} V_n(d_{m,k}) = 0,\] and hence the tail volumes $V_n(T_m)$ tend to zero as $m \to \infty$. Note also that by Fatou's lemma, we have \[ \liminf_m \sum_k h(V_n(d_{m,k})) \geq \sum_k h(V_n(d_k)).\] Combining the above with \thref{L:IntOverConv} for $\alpha_m = T_m$ and $N_m = 2^{nm}h(2^{-nm})$, we obtain

\begin{align*}
\limsup_m \, 3^n \|f_m\|_1  &= \limsup_m \int\displaylimits_{\C^n \setminus \B_m} \min \{ R, 2^{nm} h(2^{-nm}) \} \, dV_n + \sum_{k} h(V_n(d_{m.k})) \\
&\geq \limsup_m \int\displaylimits_{E \setminus T_m} \min \{ R, 2^{nm} h(2^{-nm}) \} \, dV_n + \sum_{k} h(V_n(d_k)) \\
&= \int_E R \, dV_n + \sum_{k} h(V_n(d_k)) \\
&= \int_{\C^n \setminus \cup_k d_k} R \, dV_n + \sum_{k} h(V_n(d_k)) \\
&\geq M_h(R).
\end{align*}
We have thus completed the proof of the duality theorem.
\end{proof}

\section{Splitting sequences}
\label{S:SplittingSec}

We now indicate how one may combine the duality theorem with Khrushchev's method from \cite{khrushchev1978problem} to prove \thref{T:GeneralSplittingThm}. Parts of our argument will be somewhat sparse on details, since the only novelty here is the way in which \thref{T:Maindualitytheorem} enters the picture through \thref{L:DualityBuildingBlock}. For this reason, instead of repeating the details, we shall at multiple points refer to the proof of Khrushchev's result in \cite{havinbook} and \cite{khrushchev1978problem}.

Let $\hil^\infty(\D)$ be the algebra of bounded analytic functions in the unit disk $\D$. Every function $H \in \hil^\infty(\D)$ has an extension to the unit circle $\T$ through radial limits which exist $|d \zeta|$-almost everywhere, where $|d \zeta|$ denotes the usual arclength (Lebesgue) measure on $\T$. If we construct a sequence $\{H_N\}_N$ in $\hil^\infty(\D)$ which satisfies the conditions in \thref{T:GeneralSplittingThm}, then those conditions hold also for the sequence of polynomials $\{p_N\}_N$ which are appropriately truncated Abel means of $H_N$. We construct $H_N$ as the so-called \textit{outer functions}
\begin{equation}
    \label{E:hNeq}
    H_N(z) := \exp\Big( \frac{\gamma_N}{2\pi} \int_\T \frac{\zeta + z}{\zeta - z} f_N(\zeta) |d \zeta| \Big), \quad z \in \D,
\end{equation} for appropriately chosen sequence of positive numbers $\{\gamma_N\}_N$ tending to $0$, and bounded real-valued functions $f_N$ defined on $\T$. It is well-known that the equality $|H_N(\zeta)| = \exp (\gamma_N f_N(\zeta))$ holds $|d \zeta|$-almost everywhere on $\T$. 
Khrushchev constructs his sequence $\{H_N\}_N$ in \cite[Section 4.2]{khrushchev1978problem} from functions $f_N$ of the form 
\begin{equation}
    \label{E:fNstruct}
    f_N = \sum_k f_{N,k}
\end{equation}
where $f_{N,k}$ live on disjoint arcs on $\T$ and have appropriate properties. We follow the same strategy. For $E \subset \T$, let $|E|$ denote its Lebesgue measure on $\T$. The following lemma is the only new result of this section.

\begin{lem} \thlabel{L:DualityBuildingBlock} Let $w$ be as in \thref{T:GeneralSplittingThm}, and set $R := \log^+(1/w)$. Let furthermore $N$ be a large positive integer, $I$ be an arc of $\T$, and \[A_N := \{ \zeta \in \T : R(\zeta) = \log^+(1/w(\zeta)) \leq N \}\] a sublevel set of $R$. Assume that $|A_N \cap I| > 0$. There exists a numerical constant $C_0 > 0$ and a bounded real-valued function $f_{N,I}$ supported on $I$ which satisfies 
\begin{enumerate}[(i)]
    \item $f_{N,I} \leq R$ almost everywhere on $\T$ with respect to $|d \zeta|$,
    \item $\int_\Delta f_{N,I} \, |d \zeta| \leq h(|\Delta|)$ for any arc $\Delta \subset \T$,
    \item $\int_I f_{N,I} \, |d \zeta| = 0$,
    \item $f_{N,I}(\zeta) \leq -C_0 \frac{h(|I|)}{|I|}$ for $\zeta \in A_N \cap I$.
\end{enumerate}    
\end{lem}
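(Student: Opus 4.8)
The plan is to feed the restriction of $R$ to the arc $I$ into the one-dimensional ($n=1$) version of the duality theorem, after replacing $R$ by its truncation at level $N$ on $I$. Concretely, parametrize $I$ as a sub-interval of $\R$ (rescaling arclength to ordinary length), and apply \thref{T:Maindualitytheorem} to the roof function $R_I := \min\{R, N\} \cdot 1_I$ with the gauge $h$. I first claim that the dual quantity $M_h(R_I)$ is bounded below by a fixed multiple of $h(|I|)$: on the one hand $M_h(R_I) \le h(|I|)$ trivially by covering $I$ with itself; on the other hand, since $|A_N \cap I| > 0$ and $R = R_I \ge $ (something positive, namely $\log^+(1/w)$, but more to the point $R_I$ has positive integral issues) — the real point is that the \emph{primal} optimizer $f$ produced by \thref{T:Maindualitytheorem} satisfies $\int_I f\, |d\zeta| \ge c_n M_h(R_I)$, and we will want this to be comparable to $h(|I|)$. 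To force that, I would first dispose of the trivial case: if $M_h(R_I)$ is small compared to $h(|I|)$, then by the definition of $M_h$ there is a near-optimal semi-cover $\{c_k\}$ of $I$ with $\sum_k h(|c_k|) + \int_{I \setminus \cup c_k} \min\{R,N\} \, |d\zeta|$ small; but $\sum h(|c_k|) \ge h(\sum|c_k|)$ by subadditivity, so either $\sum |c_k|$ is a definite fraction of $|I|$ (giving $\sum h(|c_k|) \gtrsim h(|I|)$ by (R1), contradiction) or the cubes miss a definite fraction of $I$, on which $\min\{R,N\}$ integrates to something we can arrange to be large — here is where I use that the construction happens only at scales/levels where $w$ is genuinely small on $A_N \cap I$. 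In fact the cleanest route: choose the truncation level and work on the sublevel set so that $\int_I \min\{R,N\}\,|d\zeta|$ is as large as we please relative to $h(|I|)$, which is possible because the hypothesis of \thref{T:GeneralSplittingThm} (divergence of $\int_E \log w$ over $h$-Carleson sets) is exactly what guarantees, via a Khrushchev-style argument, that $R = \log^+(1/w)$ cannot be $h$-Carleson-small on $A_N \cap I$ for large $N$; thus $M_h(R_I) \simeq h(|I|)$ with constants independent of $N$ and $I$.

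Granting $M_h(R_I) \ge c_1 h(|I|)$, \thref{T:Maindualitytheorem} (with the explicit constant $3$, since $n=1$) furnishes $g \in \FF_1(R_I, h)$ with $\int_I g \, |d\zeta| = \|g\|_1 \ge 3^{-1} c_1 h(|I|)$. This $g$ already satisfies analogues of (i) (since $g \le R_I \le R$) and (ii) (the cube/arc bound, noting every arc $\Delta$ gives $\int_\Delta g \le h(|\Delta|)$). It is supported on $I$. What remains is to convert the \emph{positive} mass of $g$ into a function with (iii) zero mean on $I$ and (iv) a pointwise \emph{negative} lower bound on $A_N \cap I$. The idea: set $f_{N,I} := g - b$, where $b$ is a non-negative "ballast" function supported on $A_N \cap I$ with $\int_I b = \int_I g$, chosen to be essentially a constant times $1_{A_N \cap I}$. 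Then (iii) holds by construction. For (iv): $b \approx \frac{\int_I g}{|A_N \cap I|} \ge \frac{c_1 h(|I|)}{3 |A_N \cap I|} \ge \frac{c_1 h(|I|)}{3|I|}$ on $A_N \cap I$, and since $g \ge 0$ we get $f_{N,I} = g - b \le -b \le -C_0 \frac{h(|I|)}{|I|}$ there wherever $g$ is not too large — one must be slightly careful because $g$ could be large at individual points of $A_N\cap I$, so instead I would choose the ballast to be spread \emph{only} over the portion of $A_N \cap I$ where $g$ is comparatively small (a set of at least half the measure of $A_N\cap I$, by a Chebyshev argument using $\int g \le h(|I|) \le \frac{h(|I|)}{|I|}|I|$), and restate (iv) as holding on that slightly smaller — but still positive-measure — piece; this is harmless for the application. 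For (ii) we check that subtracting a non-negative $b$ only decreases arc-integrals, so $\int_\Delta f_{N,I} \le \int_\Delta g \le h(|\Delta|)$ survives; and (i) survives since $f_{N,I} \le g \le R$. Boundedness of $f_{N,I}$ follows since $g \le N$ on $I$ and $b$ is bounded.

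The main obstacle I anticipate is the lower bound $M_h(R_I) \gtrsim h(|I|)$ with constants uniform in $N$ and $I$ — this is the only place the hypothesis on $w$ does real work, and it requires translating "$\log w$ has divergent integral over every $h$-Carleson set of positive measure" into a quantitative non-smallness statement for the truncated roof $\min\{R,N\}$ at the relevant scale, letting $N \to \infty$ along the sequence of arcs. This is precisely the mechanism by which Khrushchev's blueprint, combined with our duality, yields the polynomials in \thref{T:GeneralSplittingThm}, so I would expect to borrow the covering/Carleson-set bookkeeping from \cite[Section 4.2]{khrushchev1978problem} (or the exposition in \cite{havinbook}) and interface it with $M_h$ here. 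The remaining steps — invoking \thref{T:Maindualitytheorem}, subtracting ballast, verifying (i)–(iii) — are routine bookkeeping once that lower bound is in hand.
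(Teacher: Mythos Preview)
Your overall architecture---apply the $n=1$ duality theorem to a roof supported on $I$, extract a near-optimal $g$, then subtract a constant ballast on $A_N\cap I$ to force zero mean and the negative pointwise bound---matches the paper exactly. The difference, and it is the decisive one, is the choice of roof.

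You take $R_I := \min\{R,N\}\cdot 1_I$, a \emph{bounded} function. The paper instead takes $R_{N,I} := R\cdot 1_{I\setminus A_N}$: the \emph{untruncated} $R$ on the superlevel set $\{R>N\}\cap I$, and zero on $A_N\cap I$. This single change removes both obstacles you flag. First, the lower bound $M_h(R_{N,I})\geq h(|I|)$ is immediate and exact: if a family $\{I_k\}$ has $\sum_k h(|I_k|)<\infty$ and $|I\setminus\cup_k I_k|>0$, then $E:=I\setminus\cup_k I_k^o$ is an $h$-Carleson set of positive measure, so by hypothesis $\int_E R=\infty$; since $\int_{E\cap A_N}R\leq N|E|<\infty$, the integral $\int_{E\setminus A_N}R_{N,I}=\infty$, forcing the semi-cover cost to be infinite. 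Hence every semi-cover with finite cost must satisfy $\sum_k|I_k|\geq|I|$, and subadditivity gives $\sum_k h(|I_k|)\geq h(|I|)$. No asymptotics in $N$, no borrowed bookkeeping---the hypothesis does the work in one line precisely because the roof is allowed to be $+\infty$. Your bounded roof $\min\{R,N\}$ cannot trigger this mechanism: its integral over any uncovered $h$-Carleson set is finite, and you are left trying to prove a uniform quantitative bound that you yourself identify as the main obstacle and do not resolve.

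Second, because $R_{N,I}=0$ on $A_N\cap I$, the primal optimizer $f_{N,I}^+$ vanishes identically there. Thus $f_{N,I}=f_{N,I}^+ - p\,1_{A_N\cap I}$ equals $-p$ on \emph{all} of $A_N\cap I$, giving (iv) as stated, with no Chebyshev argument and no shrinking of the set. Your workaround---restricting the ballast to where $g$ is small---would yield (iv) only on a subset, which alters the lemma.

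In short: the missing idea is to zero out the roof on $A_N\cap I$ rather than truncate it globally. That choice simultaneously delivers the clean lower bound $M_h\geq h(|I|)$ and the vanishing of the optimizer on $A_N\cap I$; your truncated roof delivers neither.
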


\begin{proof}
    This follows from an application of \thref{T:Maindualitytheorem} in the case $n=1$. Let $R_{N,I}$ be the function coinciding with $R$ on the set $I \cap \{ \zeta \in \T : R(\zeta) > N\} = I \setminus A_N$ and which vanishes elsewhere on $\T$. If $\{I_k\}_k$ is any sequence of arcs of $\T$, then the quantity 
    \begin{equation} \label{E:MhQuantityCircle}
        \sum_{k} h(|I_k|) + \int_{\T \setminus \cup_k I_k} R_{N,I}\, |d \zeta|
    \end{equation} is finite, by assumption, only if $|I \setminus \cup_k I_k| = 0$. Indeed, otherwise the hypothesis forces the second term to be infinite, since if $I_k^o$ denotes the interior of $I_k$, then $E := I \setminus \cup_k I_k^o$ is easily seen to be an $h$-Carleson set of positive arclength measure (possibly after having added the endpoints of $I$, in the event that $I$ was not a closed arc), on which the integral of $R$, and hence of $R_{N,I}$, diverges. It follows that $\sum_k |I_k| \geq |I|$, and so subadditivity of $h$ implies that \eqref{E:MhQuantityCircle} is bounded below by $h(|I|)$. Now an application of \thref{T:Maindualitytheorem} to $R_{N,I}$ produces a non-negative function $f_{N,I}^+$ (we may suppose that $f^+_{N,I}$ is bounded by an appropriate truncation) which satisfies part $(i)$ and $(ii)$ above, and \[\int_I f_{N,I}^+ \, |d \zeta| \geq C_o h(|I|)\] for some independent constant $C_0 > 0$. Note that $f_{N,I}^+$ vanishes on $A_N \cap I$, since $R_{N,I}$ does. We set \[f_{N,I} = f^+_{N, I} - p_{N,I} 1_{A_N \cap I}\] where $1_{A_N \cap I}$ is the characteristic function of the set $A_N \cap I$ and \[ p_{N,I} = \frac{\int_I f^+_{N, I} \,|d \zeta|}{|A_N \cap I|}\] is a positive constant. Clearly $(i)$ and $(ii)$ hold for $f_{N,I}$. The choice of $p_{N,I}$ is such that $(iii)$ holds also. Moreover, for $\zeta \in A_N \cap I$, we have \[ -f_{N,I}(\zeta) = p_{N,I} \geq C_0\frac{h(|I|)}{|A_N \cap I|} \geq C_0\frac{h(|I|)}{|I|}\] which proves $(iv)$.
\end{proof}

\begin{proof}[Proof of \thref{T:GeneralSplittingThm}] Fix an integer $N$ and let $A_N$ be as in \thref{L:DualityBuildingBlock}. Let $\{I_{N,k}\}_k$ be $N$  disjoint arcs of $\T$ (half-open, say), each of length $2\pi/N$, so that they together form a partition of $\T$. We construct $f_N$ as in \eqref{E:fNstruct}, where $f_{N,k} = f_{N,I_{N,k}}$ is as in \thref{L:DualityBuildingBlock} if $|A_N \cap I_{N,k}| > 0$, and $f_{N,k} = 0$ otherwise. By \eqref{E:Reg2} and part $(iv)$ of \thref{L:DualityBuildingBlock}, there exists a sequence of positive numbers $\{\gamma_N\}_N$ tending to $0$ for which we have that 
\begin{equation}
    \label{E:fNCollapseToMinusInfty}
    \lim_{N \to \infty} \gamma_N f_N(\zeta) = -\infty 
\end{equation}$|d \zeta|$-almost everywhere on \[\bigcup_N A_N = \{ \zeta \in \T : R(\zeta) < \infty \} = \{ \zeta \in \T : w(\zeta) > 0 \}.\] The bound \[ \int_\Delta f_N \, |d \zeta| \leq 2 h(|\Delta|)\] for any arc $\Delta \subset \T$ follows from parts $(ii)$ and $(iii)$ of \thref{L:DualityBuildingBlock} as in Khrushchev's original proof (see \cite[Section 4.3]{khrushchev1978problem} or \cite[p. 300]{havinbook}). By a classical Poisson integral estimate, this leads to the bound \[ |H_N(z)| \leq \exp \Big( 2 \gamma_N C_1 \frac{h(1-|z|)}{1-|z|} \Big)\] (see \cite[Lemma 4.2]{khrushchev1978problem} or \cite[p. 297]{havinbook} for details). Here $C_1 > 0$ is some independent constant, and $H_N$ is as in \eqref{E:hNeq}. By part $(iii)$ of \thref{L:DualityBuildingBlock} and \eqref{E:hNeq}, we have $H_N(0) = 1$, and (since $\gamma_N \to 0$) the above inequality shows that $\limsup_N |H_N(z)| \leq 1$ for all $z \in \D$. By the maximum modulus principle we see that properties $(ii)$ and $(iii)$ in \thref{T:GeneralSplittingThm} hold for $\{H_N\}_N$. It remains to verify $(i)$. Note that we have \[f_N(\zeta) \leq R(\zeta) = \log^+(1/(w(\zeta))\] by part $(i)$ of \thref{L:DualityBuildingBlock}, and so as soon as $\gamma_N$ is small enough for the inequality $t\gamma_N \leq 1$ to hold, on $\T$ we have the inequality 
\begin{align*}
    |H_N|^t w &= \exp ( t \gamma_N f_N) w  \\
    & \leq \exp( \log^+(1/w))w \\
    &\leq 1 + w.
\end{align*} Finally, combining the equality $|H_N| = \exp(\gamma_N f_N)$ on $\T$, the estimate \eqref{E:fNCollapseToMinusInfty}, and the Dominated Convergence Theorem shows that $(i)$ of \thref{T:GeneralSplittingThm} also holds. Thus the proof of \thref{T:GeneralSplittingThm} is complete. 
\end{proof}

\bibliographystyle{plain}
\bibliography{mybib}

\end{document}